\newcommand{\bfs}{\boldsymbol}
\newtheorem{theorem}{Theorem}[section]
\newtheorem{corollary}[theorem]{Corollary}
\newtheorem{lemma}[theorem]{Lemma}
\newtheorem{proposition}[theorem]{Proposition}
\newcommand{\N}{\mathbb N}
\newcommand{\Z}{\mathbb Z}
\newcommand{\F}{\mathbb F}
\newcommand{\A}{\mathbb A}
\newcommand{\Pp}{\mathbb P}
\newcommand{\K}{{\sf K}}
\newcommand{\fq}{\F_{\hskip-0.7mm q}}
\newcommand{\cfq}{\overline{\F}_{\hskip-0.7mm q}}
\def\ifm#1#2{\relax \ifmmode#1\else#2\fi}
\newcommand{\klk}    {\ifm {,\ldots,} {$,\ldots,$}}
\newcommand{\plp}    {\ifm {+\cdots+} {$+\ldots+$}}
\newcommand{\om}[2]   {{#1}_0 \klk {#1}_{#2}}
\newcommand{\xon}    {\ifm {\om X n} {$\om X n$}}
\begin{document}
\title[Explicit estimates for singular complete intersections]{Explicit
estimates for the number of rational points of singular complete
intersections over a finite field}
\author[G. Matera et al.]{
%
Guillermo Matera${}^{1,2}$,
%
Mariana P\'erez${}^1$,
%
Melina Privitelli${}^2$,}

\address{${}^{1}$Instituto del Desarrollo Humano,
Universidad Nacional de Gene\-ral Sarmiento, J.M. Guti\'errez 1150
(B1613GSX) Los Polvorines, Buenos Aires, Argentina}
\email{\{gmatera,\,vperez\}@ungs.edu.ar}
\address{${}^{2}$ National Council of Science and Technology (CONICET),
Ar\-gentina}
\address{${}^{2}$Instituto de Ciencias,
Universidad Nacional de Gene\-ral Sarmiento, J.M. Guti\'errez 1150
(B1613GSX) Los Polvorines, Buenos Aires, Argentina}
\email{mprivitelli@conicet.gov.ar}

\thanks{The authors were partially supported by the grants
PIP CONICET 11220090100421 and UNGS 30/3180.}%

\subjclass{11G25, 14G05, 14G15, 14M10, 14B05}
\keywords{Finite fields, singular complete intersections, rational
points, Bertini's smoothness theorem, Hooley--Katz estimate}%

\date{\today}
\maketitle

\begin{abstract}
Let $V\subset\Pp^n(\cfq)$ be a complete intersection defined over a
finite field $\fq$ of dimension $r$ and singular locus of dimension
at most $0\le s\le r-2$. We obtain an explicit version of the
Hooley--Katz estimate $||V(\fq)|-p_r|=\mathcal{O}(q^{(r+s+1)/2})$,
where $|V(\fq)|$ denotes the number of $\fq$--rational points of $V$
and $p_r:=|\Pp^r(\fq)|$. Our estimate improves all the previous
estimates in several important cases. Our approach relies on tools
of classical algebraic geometry. A crucial ingredient is a new
effective version of the Bertini smoothness theorem, namely an
explicit upper bound of the degree of a proper Zariski closed subset
of $(\Pp^n)^{s+1}(\cfq)$ which contains all the singular linear
sections of $V$ of codimension $s+1$.
\end{abstract}
%
%
\section{Introduction}

Let $\fq$ be the finite field of $q$ elements and let $\cfq$ be the
algebraic closure of $\fq$. By 
$\Pp^n:=\Pp^n(\cfq)$ 
and $\A^n:=\A^n(\cfq)$ we denote the $n$--dimensional projective and
affine spaces defined over $\cfq$ respectively. For any affine or
projective variety $V$, we denote by $V(\fq)$ the set of
$\fq$--rational points of $V$, that is, the set of points of $V$
with coordinates in $\fq$, and by $|V(\fq)|$ its cardinality. In
particular, it is well--known that, for $r\ge 0$,
$$p_r:=|\Pp^r(\fq)|= q^r + \cdots + q + 1.$$

Let $V\subset\Pp^n$ be an ideal--theoretic complete intersection
defined over $\fq$, of dimension $r$ and multidegree
$\boldsymbol{d}:=(d_1\klk d_{n-r})$. In a fundamental work
\cite{Deligne74}, P. Deligne showed that, if $V$ is nonsingular,
then
\begin{equation}\label{eq: estimate deligne intro}
\big||V(\fq)|-p_r\big|\le b_r'(n,\boldsymbol{d})\,q^{\frac{r}{2}},
\end{equation}
where $b_r'(n,\boldsymbol{d})$ is the $r$th primitive Betti number
of any nonsingular complete intersection of $\Pp^n$ of dimension $r$
and multidegree $\boldsymbol{d}$ (see \cite[Theorem 4.1]{GhLa02a}
for an explicit expression of $b_r'(n,\boldsymbol{d})$ in terms of
$n$, $r$ and $\boldsymbol{d}$).

This result was extended by C. Hooley and N. Katz to singular
complete intersections. More precisely, in \cite{Hooley91} it is
proved that, if the singular locus of $V$ has dimension at most $s$,
then
\begin{equation}\label{eq: estimate hooley-katz}
|V(\fq)|=p_r+ \mathcal{O}(q^{\frac{r+s+1}{2}}),
\end{equation}
where the constant implied by the $\mathcal{O}$--notation depends
only on $n$, $r$ and $\boldsymbol{d}$, and it is not explicitly
given. Finally, S. Ghorpade and G. Lachaud obtained the following
explicit version of the Hooley--Katz bound \eqref{eq: estimate
hooley-katz} in \cite{GhLa02a}:
\begin{equation}\label{eq: estimate GL intro}
\big||V(\fq)|-p_r\big|\le
b_{r-s-1}'(n-s-1,\boldsymbol{d})\,q^{\frac{r+s+1}{2}}+
C(n,r,\boldsymbol{d})\,q^{\frac{r+s}{2}},
\end{equation}
where $C(n,r,\boldsymbol{d}):=9\cdot
2^{n-r}\big((n-r)d+3\big)^{n+1}$ and $d:=\max_{1\le i\le n-r}d_i$.

For the potential applications of \eqref{eq: estimate GL intro}, the
fact that the constant $C(n,r,\boldsymbol{d})$ depends exponentially
on the dimension $n$ of the ambient space $\Pp^n$ may be
inconvenient. This can be seen for example in \cite{CeMaPePr14} and
\cite{MaPePr14}, where we use estimates on the number of
$\fq$--rational points of singular complete intersection to
determine the behavior of the average value set of families of
univariate polynomials with prescribed coefficients over $\fq$. For
this reason, in this paper we obtain another explicit estimate on
$|V(\fq)|$ where this exponential dependency on $n$ is avoided.

From a methodological point of view, the estimates in \cite{GhLa02a}
are based on the Grothendieck--Lefschetz Trace Formula, together
with estimates for the dimension of certain spaces of \'etale
$\ell$--adic cohomology associated with the complete intersection
$V\subset\Pp^n$ under consideration.

Our approach is rather different and relies on tools of classical
projective algebraic geometry, combined with Deligne's estimate
\eqref{eq: estimate deligne intro}. The crucial geometric ingredient
is an effective version of the Bertini smoothness theorem, namely we
obtain quantitative information on the number of $\fq$--definable
linear sections $\mathcal{L}\subset\Pp^n$ such that
$V\cap\mathcal{L}$ has codimension $s+1$ and is nonsingular (see
Theorem \ref{th: definition H}).
\begin{theorem}\label{theorem: effective Bertini intro}
Let $V\subset \Pp^n$ be a complete intersection defined over $\fq$,
of dimension $r$, multidegree
$\boldsymbol{d}:=(d_1,\ldots,d_{n-r})$, degree $\delta$ and singular
locus of dimension at most $0\le s\le r-2$. Let
$D:=\sum_{i=1}^{n-r}(d_i-1)$. There exists a hypersurface
$\mathcal{H}\subset(\Pp^n)^{s+1}$, defined by a multihomogeneous
polynomial of degree at most $D^{r-s-1}(D+r-s)\delta$ in each group
of variables, with the following property: if
$\boldsymbol{\gamma}\in(\Pp^n)^{s+1}\setminus\mathcal{H}$ and
$\mathcal{L}:=\{\bfs\gamma\cdot x=0\}$, then $V\cap \mathcal{L}$ is
nonsingular of pure dimension $r-s-1$.
\end{theorem}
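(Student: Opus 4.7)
The plan is to realize $\mathcal{H}$ as the image under projection of a natural incidence variety in $V\times(\Pp^n)^{s+1}$, and to bound the multidegree of its defining polynomial via the Jacobian criterion combined with an inductive Bertini reduction.

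First set up the incidence variety. Writing $V=\{f_1=\cdots=f_{n-r}=0\}$ with $\deg f_j=d_j$, the Jacobian criterion shows that $V\cap\mathcal{L}_{\bfs{\gamma}}$ is singular at a point $x$ of itself precisely when the $(n-r+s+1)\times(n+1)$ matrix $J(x,\bfs{\gamma})$ obtained by stacking the rows $Df_1(x),\ldots,Df_{n-r}(x),\gamma_1,\ldots,\gamma_{s+1}$ has rank less than $n-r+s+1$. Define
\[
\mathcal{I}:=\{(x,\bfs{\gamma})\in V\times(\Pp^n)^{s+1}\ :\ \gamma_i\cdot x=0\ \forall i,\ \mathrm{rank}\,J(x,\bfs{\gamma})<n-r+s+1\},
\]
and let $\mathcal{H}$ be the Zariski closure of its image under the second projection. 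By classical Bertini the image is a proper closed subset; stability under multihomogeneous scaling in each $\gamma_i$ makes $\mathcal{H}$ a hypersurface, and the invariance of $\mathcal{I}$ under permutation of the $\gamma_i$'s forces the defining polynomial to have the same degree in each group of variables.

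Next I would split $\mathcal{I}$ according to whether $x$ lies in $V_{\mathrm{sing}}$ or in $V\setminus V_{\mathrm{sing}}$. Over $V_{\mathrm{sing}}$ the rank condition is automatic and the image is exactly $\{\bfs{\gamma}:\mathcal{L}_{\bfs{\gamma}}\cap V_{\mathrm{sing}}\neq\emptyset\}$, i.e. the zero locus of the Chow form of $V_{\mathrm{sing}}$, a hypersurface of multidegree $\deg V_{\mathrm{sing}}$ in each $\gamma_i$. Since $V_{\mathrm{sing}}$ has codimension at least $r-s$ in $V$ and is cut out from $V$ by the degree-$D$ Jacobian minors of $f_1,\ldots,f_{n-r}$, choosing $r-s$ generic linear combinations and applying Bézout yields $\deg V_{\mathrm{sing}}\le\delta\cdot D^{r-s}$. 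Over $V\setminus V_{\mathrm{sing}}$ I would argue inductively: setting $V_0:=V$ and $V_i:=V_{i-1}\cap\{\gamma_i\cdot x=0\}$, induction gives $\dim\mathrm{Sing}(V_i)\le s-i$, so $V_{s+1}$ is smooth off $V_{\mathrm{sing}}$. At each step the bad values of $\gamma_i$ form a hypersurface in $\Pp^n$ whose degree is controlled by a first-polar-class estimate for $V_{i-1}^{\mathrm{reg}}$, which in turn is bounded by a rank-deficiency Bézout argument with a growth factor of $D$ per step. Combining the two contributions, the singular part gives multidegree at most $\delta D^{r-s}$ per $\gamma_i$ and the Bertini part at most $(r-s)\delta D^{r-s-1}$ per $\gamma_i$, whose sum is the claimed bound $D^{r-s-1}(D+r-s)\delta$.

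The main obstacle will be the precise polar-class estimate in the inductive Bertini step: translating the rank-deficiency locus into a single multihomogeneous polynomial in $\bfs{\gamma}$ with the exact factor $(r-s)\delta D^{r-s-1}$, rather than a cruder $\delta D^{r-s}$, requires exploiting the complete intersection structure of $V$ and a resultant-type construction that respects the permutation symmetry across the $s+1$ groups of variables.
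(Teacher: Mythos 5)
Your setup (the incidence correspondence in $V\times(\Pp^n)^{s+1}$ cut out by $\gamma_i\cdot x=0$ and the rank-deficiency of the stacked Jacobian, followed by projection to the $\bfs\gamma$-factor) is the same starting point as the paper's, and your split into the contribution of $\Sigma:=\mathrm{Sing}(V)$ and a ``Bertini part'' over $V_{\rm sm}$ mirrors the paper's variety $\mathcal{W}''$. But there are two genuine gaps. First, the closure of the second projection of the incidence variety need not be a hypersurface; multihomogeneity under scaling only makes it a multiprojective subvariety, possibly of higher codimension. What is needed (and what the theorem asserts) is a hypersurface \emph{containing} the bad locus together with a bound on its multidegree, and that bound is exactly the content of the theorem. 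Second, and decisively, the quantitative step producing the factor $(r-s)\delta D^{r-s-1}$ per group of variables is not proved: you yourself flag it as ``the main obstacle.'' Your inductive scheme $V_i:=V_{i-1}\cap\{\gamma_i\cdot x=0\}$ makes the bad values of $\gamma_i$ depend on the earlier choices $\gamma_1,\dots,\gamma_{i-1}$, and it is not explained how these step-by-step conditions assemble into a single multihomogeneous polynomial in all $s+1$ groups with the stated per-group degree rather than degrees that accumulate multiplicatively; the ``first-polar-class estimate'' is not formulated, let alone bounded.

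For comparison, the paper closes exactly this gap as follows: after showing that the incidence variety over $V_{\rm sm}$ is absolutely irreducible of dimension $n(s+1)-1$ and that the enlarged variety $\mathcal{W}''$ (which also accounts for $\Sigma$ and for degenerate $\bfs\gamma$) has the same dimension, it replaces the many maximal minors of the augmented Jacobian by $r-s$ \emph{generic linear combinations} $\Delta^1,\dots,\Delta^{r-s}$, obtaining exactly $n+1$ equations $F_1,\dots,F_{n-r},\ \Gamma_0\cdot X,\dots,\Gamma_s\cdot X,\ \Delta^1,\dots,\Delta^{r-s}$ in $\Pp^n$. Since for generic $\bfs\gamma$ this system has no projective solutions, the specialization of the multivariate resultant is a nonzero multihomogeneous polynomial in $\fq[\bfs\Gamma]$ whose zero set contains the entire bad locus (including $\{\bfs\gamma:\mathcal{L}\cap\Sigma\neq\emptyset\}$, so no separate Chow-form argument is needed), and the classical degree formula for the resultant, together with the fact that each $\Delta^j$ is linear in each group $\Gamma_i$, gives $\deg_{\Gamma_i}=D^{r-s}\delta+(r-s)D^{r-s-1}\delta=D^{r-s-1}(D+r-s)\delta$. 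Without an analogue of this resultant construction (and of the dimension/irreducibility statements guaranteeing that the specialized resultant is not identically zero), your proposal establishes only the qualitative Bertini statement, not the explicit degree bound that the theorem is about.
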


\cite{Ballico03} and \cite{CaMa07} provide effective versions of the
Bertini smoothness theorem for hypersurfaces and normal complete
intersections respectively. Theorem \ref{theorem: effective Bertini
intro} significantly improves and generalizes both results. We also
remark that a different variant of an effective Bertini smoothness
theorem is obtained in \cite{CaMaPr13}.

Combining Theorem \ref{theorem: effective Bertini intro} with upper
bounds on the number of $\fq$--rational zeros of multihomogeneous
polynomials we obtain rather precise estimates on the number of
nonsingular $\fq$--definable linear sections of codimension $s+1$ of
$V$. Then the analysis of the second moment of the number of
$\fq$--rational points of $V$ in linear sections of codimension
$s+1$ yields an estimate on the number of $\fq$--rational points of
$V$. More precisely, we obtain the following result (see Theorem
\ref{th: estimate hooley}).
\begin{theorem}\label{theorem: Hooley intro}
Assume that $q> 2(s+1)D^{r-s-1}(D+r-s)\delta$ and let $V \subset
\Pp^n$ a complete intersection defined over $\fq$, of dimension $r$,
multidegree $\boldsymbol{d}:=(d_1,\dots,d_{n-r})$, degree $\delta$
and singular locus of dimension at most $0\le s \le r-2$. Then
\begin{equation}\label{eq: estimate Hooley intro}
\big||V(\fq)|-p_r\big|\leq \big(b'_{r-s-1}(n-s-1,\boldsymbol{d}) +2
\sqrt{\delta}+1\big)\,q^{\frac{r+s+1}{2}}.
\end{equation}
\end{theorem}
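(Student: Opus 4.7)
The plan is to estimate $|V(\fq)|$ via an averaging argument over the $\fq$-rational linear sections of $V$ of codimension $s+1$. For each $\bfs{\gamma}\in(\Pp^n)^{s+1}(\fq)$ set $\mathcal{L}_{\bfs{\gamma}}:=\{\bfs{\gamma}\cdot x=0\}$ and $N(\bfs{\gamma}):=|V(\fq)\cap\mathcal{L}_{\bfs{\gamma}}|$. A double count of the incidences $(x,\bfs{\gamma})$ with $x\in V(\fq)\cap\mathcal{L}_{\bfs{\gamma}}$, based on the fact that every $\fq$-rational point of $\Pp^n$ lies on $p_{n-1}$ rational hyperplanes, yields the averaging identity
\[
\sum_{\bfs{\gamma}\in(\Pp^n)^{s+1}(\fq)}N(\bfs{\gamma})=|V(\fq)|\,p_{n-1}^{s+1};
\]
applying the same bookkeeping to ordered pairs of points of $V(\fq)\cap\mathcal{L}_{\bfs{\gamma}}$ gives the companion second-moment identity
\[
\sum_{\bfs{\gamma}\in(\Pp^n)^{s+1}(\fq)}N(\bfs{\gamma})^2=|V(\fq)|\,p_{n-1}^{s+1}+|V(\fq)|\bigl(|V(\fq)|-1\bigr)\,p_{n-2}^{s+1}.
\]

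The next step is to split the first identity according to whether $\bfs{\gamma}$ lies in the exceptional hypersurface $\mathcal{H}$ supplied by Theorem \ref{theorem: effective Bertini intro}. For $\bfs{\gamma}\notin\mathcal{H}$ the intersection $V\cap\mathcal{L}_{\bfs{\gamma}}$ is a nonsingular complete intersection of pure dimension $r-s-1$ in $\mathcal{L}_{\bfs{\gamma}}\cong\Pp^{n-s-1}$, so Deligne's estimate \eqref{eq: estimate deligne intro} gives $|N(\bfs{\gamma})-p_{r-s-1}|\le b'_{r-s-1}(n-s-1,\bfs{d})\,q^{(r-s-1)/2}$. Summing over the complement of $\mathcal{H}$ and dividing by $p_{n-1}^{s+1}$ contributes the Deligne term $b'_{r-s-1}(n-s-1,\bfs{d})\,q^{(r+s+1)/2}$ of \eqref{eq: estimate Hooley intro}, together with a purely combinatorial discrepancy $p_n^{s+1}p_{r-s-1}/p_{n-1}^{s+1}-p_r$ whose absolute value is bounded by $p_s\le q^{(r+s+1)/2}$ (under the hypothesis on $q$), accounting for the $+1$ in \eqref{eq: estimate Hooley intro}. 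For the bad contribution with $\bfs{\gamma}\in\mathcal{H}$, I would estimate $|\mathcal{H}(\fq)|$ via the multidegree bound of Theorem \ref{theorem: effective Bertini intro} combined with a standard group-by-group argument for multihomogeneous hypersurfaces, obtaining $|\mathcal{H}(\fq)|\le(s+1)D^{r-s-1}(D+r-s)\delta\cdot p_{n-1}p_n^s$, which by hypothesis is strictly less than $p_{n-1}p_n^s\cdot q/2$.

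The main technical obstacle is to control $\sum_{\bfs{\gamma}\in\mathcal{H}}N(\bfs{\gamma})$ finely enough to obtain a $\sqrt{\delta}$-dependence rather than $\delta$ in the final bound. For this I would apply Cauchy-Schwarz to the \emph{centered} quantities $N(\bfs{\gamma})-p_{r-s-1}$, so that the awkward constant $|\mathcal{H}(\fq)|\,p_{r-s-1}$ cancels against its counterpart in the main sum, leaving
\[
\Big(\sum_{\bfs{\gamma}\in\mathcal{H}}\bigl(N(\bfs{\gamma})-p_{r-s-1}\bigr)\Big)^2\le|\mathcal{H}(\fq)|\cdot\sum_{\bfs{\gamma}}\bigl(N(\bfs{\gamma})-p_{r-s-1}\bigr)^2.
\]
Expanding the variance via the two moment identities, a direct computation shows that the three leading-order terms $V(V-1)p_{n-2}^{s+1}$, $-2p_{r-s-1}Vp_{n-1}^{s+1}$ and $p_n^{s+1}p_{r-s-1}^2$ produced by the expansion have cancelling contributions at order $q^{2r+(n-2)(s+1)}$, reducing the variance to the much smaller order $|V(\fq)|\,p_{n-1}^{s+1}$. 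Substituting the elementary a priori bound $|V(\fq)|\le\delta\,p_r$, the multidegree estimate on $|\mathcal{H}(\fq)|$, and the hypothesis on $q$ then yields precisely the $2\sqrt{\delta}\,q^{(r+s+1)/2}$ contribution, and combining with the Deligne term and the combinatorial discrepancy produces \eqref{eq: estimate Hooley intro}.
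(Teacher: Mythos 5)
Your overall plan (average over $\fq$-rational linear sections of codimension $s+1$, use Deligne off the exceptional hypersurface $\mathcal{H}$, and control the bad set through a second moment) is in the spirit of the paper, but the decisive step is wrong as stated. You center the second moment at the \emph{constant} $p_{r-s-1}$ and claim that the three terms $V(V-1)p_{n-2}^{s+1}$, $-2p_{r-s-1}Vp_{n-1}^{s+1}$, $p_{r-s-1}^2p_n^{s+1}$ cancel at order $q^{2r+(n-2)(s+1)}$, leaving a variance of order $|V(\fq)|\,p_{n-1}^{s+1}$. That cancellation happens only if $|V(\fq)|\approx p_r$, which is precisely the statement being proved; the argument is circular. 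Concretely, writing $V:=|V(\fq)|$, your own two identities give exactly
\[
\sum_{\bfs\gamma}\bigl(N(\bfs\gamma)-p_{r-s-1}\bigr)^2
= V(V-1)p_{n-2}^{s+1}+Vp_{n-1}^{s+1}-2p_{r-s-1}Vp_{n-1}^{s+1}+p_{r-s-1}^2\,p_n^{s+1},
\]
and the quadratic part equals $p_{n-2}^{s+1}\bigl(V-p_{r-s-1}p_{n-1}^{s+1}/p_{n-2}^{s+1}\bigr)^2$ plus lower-order corrections, i.e.\ roughly $p_{n-2}^{s+1}\bigl(V-p_r+p_s\bigr)^2$. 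A priori the only available bound is $V\le\delta p_r$ (inequality \eqref{eq: upper bound -- projective gral}), under which this term can be as large as about $(\delta-1)^2q^{2r+(n-2)(s+1)}$, vastly exceeding $Vp_{n-1}^{s+1}\approx\delta q^{r+(n-1)(s+1)}$ since $r\ge s+2$. So the variance bound you feed into Cauchy--Schwarz is not justified, and the $2\sqrt{\delta}\,q^{(r+s+1)/2}$ term does not follow.

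The paper avoids this by centering at the empirical mean rather than at $p_{r-s-1}$: it works with affine tuples and sets $M_{s+1}:=\sum_{\bfs\gamma\in\fq^{(n+1)(s+1)}}\bigl(N-q^{s+1}N(\bfs\gamma)\bigr)^2$ with $N:=|V(\fq)|$, for which the incidence counts are exactly uniform and one gets the identity $M_{s+1}=Nq^{(n+1)(s+1)}(q^{s+1}-1)$ with no unknown discrepancy left over (Lemma \ref{lemma: bound second moment}). It then uses a pigeonhole (Chebyshev-type) argument instead of Cauchy--Schwarz over $\mathcal{H}$: at least half of all $\bfs\gamma$ satisfy $|N-q^{s+1}N(\bfs\gamma)|\le\sqrt{2N(q^{s+1}-1)}$, while for $q>2(s+1)D^{r-s-1}(D+r-s)\delta$ more than half yield nonsingular sections of codimension $s+1$ (Lemma \ref{lemma: nonsingular linear sections}); picking one $\bfs\gamma$ with both properties and applying Deligne to $V\cap\mathcal{L}$ together with $p_r=q^{s+1}p_{r-s-1}+p_s$ and $N\le\delta p_r$ gives \eqref{eq: estimate Hooley intro}. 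Your route could in principle be repaired, either by switching to this centering, or by keeping your centering and absorbing the self-referential term $p_{n-2}^{(s+1)/2}|V-p_r+O(p_s)|$ (whose coefficient after Cauchy--Schwarz is about $\sqrt{(s+1)d/q}<1/\sqrt{2}$ under the hypothesis on $q$) into the left-hand side; but the latter inflates the final constant, so the stated bound $b'_{r-s-1}+2\sqrt{\delta}+1$ would not come out without further work.
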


According to \cite[Proposition 4.2]{GhLa02a}, the Betti number
$b'_{r-s-1}(n-s-1,\boldsymbol{d})$ can be bounded from above by a
quantity which is roughly of order $D^{r-s}\delta$. It follows that
the error term of \eqref{eq: estimate Hooley intro} depends linearly
on $\delta$. In this sense, \eqref{eq: estimate Hooley intro}
significantly improves \eqref{eq: estimate GL intro}, whose error
term may include an exponential term $\delta^{n+1}$ when $V$ is a
hypersurface. On the other hand, \eqref{eq: estimate GL intro} is
valid without restrictions on $q$, while \eqref{eq: estimate Hooley
intro} only holds for $q>2(s+1)D^{r-s-1}(D+r-s)\delta$.

The paper is organized as follows. In Section \ref{section: notions
and notations} we include a brief review of the notions of classical
algebraic geometry which we use. We also obtain an upper bound on
the number of $\fq$--rational zeros of a multihomogeneous
polynomial. Section \ref{section: Bertini} is devoted to the proof
of Theorem \ref{theorem: effective Bertini intro}. Finally, in
Section \ref{section: Hooley} we combine the upper bound of Section
\ref{section: notions and notations} with Theorem \ref{theorem:
effective Bertini intro} and the analysis of the second moment
mentioned before to prove Theorem \ref{theorem: Hooley intro}.
Taking into account that the condition on $q$ of the statement of
Theorem \ref{theorem: Hooley intro} may restrict its applicability,
we obtain a further estimate for normal complete intersections which
is valid without restrictions on $q$ (Corollary \ref{coro: estimate
normal variety}).
%
%
\section{Notions, notations and preliminary results}
\label{section: notions and notations} We use standard notions and
notations of commutative algebra and algebraic geometry as can be
found in, e.g., \cite{Harris92}, \cite{Kunz85} or
\cite{Shafarevich94}.

%
%
Let $\K$ be any of the fields $\fq$ or $\cfq$. We denote by $\A^n$
the affine $n$--dimensional space $\cfq{\!}^{n}$ and by $\Pp^n$ the
projective $n$--dimensional space over $\cfq{\!}^{n+1}$. Both spaces
are endowed with their respective Zariski topologies over $\K$, for
which a closed set is the zero locus of a set of polynomials of
$\K[X_1,\ldots, X_{n}]$, or of a set of homogeneous polynomials of
$\K[X_0,\ldots, X_{n}]$.

A subset $V\subset \Pp^n$ is a {\em projective variety defined over}
$\K$ (or a projective $\K$--variety for short) if it is the set of
common zeros in $\Pp^n$ of homogeneous polynomials $F_1,\ldots, F_m
\in\K[X_0 ,\ldots, X_n]$. Correspondingly, an {\em affine variety of
$\A^n$ defined over} $\K$ (or an affine $\K$--variety for short) is
the set of common zeros in $\A^n$ of polynomials $F_1,\ldots, F_{m}
\in \K[X_1,\ldots, X_{n}]$. We think a projective or affine
$\K$--variety to be equipped with the induced Zariski topology. We
shall frequently denote by $V(F_1\klk F_m)$ or $\{F_1=0\klk F_m=0\}$
the affine or projective $\K$--variety consisting of the common
zeros of the polynomials $F_1\klk F_m$.

In the remaining part of this section, unless otherwise stated, all
results referring to varieties in general should be understood as
valid for both projective and affine varieties.

A $\K$--variety $V$ is $\K$--{\em irreducible} if it cannot be
expressed as a finite union of proper $\K$--subvarieties of $V$.
Further, $V$ is {\em absolutely irreducible} if it is
$\cfq$--irreducible as a $\cfq$--variety. Any $\K$--variety $V$ can
be expressed as an irredundant union $V=\mathcal{C}_1\cup
\cdots\cup\mathcal{C}_s$ of irreducible (absolutely irreducible)
$\K$--varieties, unique up to reordering, which are called the {\em
irreducible} ({\em absolutely irreducible}) $\K$--{\em components}
of $V$.

For a $\K$-variety $V$ contained in $\Pp^n$ or $\A^n$, we denote by
$I(V)$ its {\em defining ideal}, namely the set of polynomials of
$\K[X_0,\ldots, X_n]$, or of $\K[X_1,\ldots, X_n]$, vanishing on
$V$. The {\em coordinate ring} $\K[V]$ of $V$ is defined as the
quotient ring $\K[X_0,\ldots,X_n]/I(V)$ or
$\K[X_1,\ldots,X_n]/I(V)$. The {\em dimension} $\dim V$ of $V$ is
the length $r$ of the longest chain $V_0\varsubsetneq V_1
\varsubsetneq\cdots \varsubsetneq V_r$ of nonempty irreducible
$\K$-varieties contained in $V$. We call $V$ {\em equidimensional}
if all its irreducible $\K$--components are of the same dimension.
We say that $V$ has {\em pure dimension} $r$ if it is
equidimensional of dimension $r$.

A $\K$--variety of $\Pp^n$ or $\A^n$ of pure dimension $n-1$ is
called a $\K$--{\em hypersurface}. A $\K$--hypersurface of $\Pp^n$
(or $\A^n$) is the set of zeros of a single nonzero polynomial of
$\K[X_0\klk X_n]$ (or of $\K[X_1\klk X_n]$).

The {\em degree} $\deg V$ of an irreducible $\K$-variety $V$ is the
maximum number of points lying in the intersection of $V$ with a
linear space $L$ of codimension $\dim V$, for which $V\cap L$ is a
finite set. More generally, following \cite{Heintz83},
if $V=\mathcal{C}_1\cup\cdots\cup \mathcal{C}_s$ is the
decomposition of $V$ into irreducible $\K$--components, we define
its degree as
$$\deg V:=\sum_{i=1}^s\deg \mathcal{C}_i.$$
The degree of a $\K$--hypersurface $V$ is the degree of a polynomial
of minimal degree defining $V$. 
%

%
%
%

Let $V\subset\A^n$ be a $\K$--variety and let $I(V)\subset
\K[X_1,\ldots, X_n]$ be the defining ideal of $V$. Let $x$ be a
point of $V$. The {\em dimension} $\dim_xV$ {\em of} $V$ {\em at}
$x$ is the maximum of the dimensions of the irreducible
$\K$--components of $V$ that contain $x$. If $I(V)=(F_1,\ldots,
F_m)$, the {\em tangent space} $\mathcal{T}_xV$ to $V$ at $x$ is the
kernel of the Jacobian matrix $(\partial F_i/\partial X_j)_{1\le
i\le m,1\le j\le n}(x)$ of the polynomials $F_1,\ldots, F_m$ with
respect to $X_1,\ldots, X_n$ at $x$. We have the following
inequality (see, e.g., \cite[page 94]{Shafarevich94}):
$$\dim\mathcal{T}_xV\ge \dim_xV.$$
The point $x$ is {\em regular} if $\dim\mathcal{T}_xV=\dim_xV$.
Otherwise, the point $x$ is called {\em singular}. The set of
singular points of $V$ is the {\em singular locus}
$\mathrm{Sing}(V)$ of $V$; it is a closed $\K$--subvariety of $V$. A
variety is called {\em nonsingular} if its singular locus is empty.
For a projective variety, the concepts of tangent space, regular and
singular point can be defined by considering an affine neighborhood
of the point under consideration.
%
%
\subsection{Complete intersections}
A $\K$--variety $V$ of dimension $r$ in the $n$--dimensional space
is an (ideal--theoretic) {\em complete intersection} if its ideal
$I(V)$ over $\K$ can be generated by $n-r$ polynomials. If
$V\subset\Pp^n$ is a complete intersection defined over $\K$, of
dimension $r$ and degree $\delta$, and $F_1 \klk F_{n-r}$ is a
system of homogeneous generators of $I(V)$, the degrees $d_1\klk
d_{n-r}$ depend only on $V$ and not on the system of homogeneous
generators. Arranging the $d_i$ in such a way that $d_1\geq d_2 \geq
\cdots \geq d_{n-r}$, we call $\boldsymbol{d}:=(d_1\klk d_{n-r})$
the {\em multidegree} of $V$.

If $V\subset\Pp^n$ is a complete intersection of
multidegree $\boldsymbol{d}:=(d_1\klk d_{n-r})$, then 
the {\em B\'ezout theorem} (see, e.g., \cite[Theorem 18.3]{Harris92}
or \cite[\S 5.5, page 80]{SmKaKeTr00}) asserts that
$$\deg V=d_1\cdots d_{n-r}.$$

We shall consider a particular class of complete intersections,
which we now define. A $\K$--variety is {\em regular in codimension
$m$} if the singular locus $\mathrm{Sing}(V)$ of $V$ has codimension
at least $m+1$ in $V$, namely if $\dim V-\dim \mathrm{Sing}(V)\ge
m+1$. A complete intersection $V$ which is regular in codimension 1
is called {\em normal} (actually, normality is a general notion that
agrees on complete intersections with the one we define here). A
fundamental result for projective complete intersections is the
Hartshorne connectedness theorem (see, e.g., \cite[Theorem
VI.4.2]{Kunz85}), which we now state. If $V\subset\Pp^n$ is a
complete intersection defined over $\K$ and $W\subset V$ is any
$\K$--subvariety of codimension at least 2, then $V\setminus W$ is
connected in the Zariski topology of $\Pp^n$ over $\K$. Applying the
Hartshorne connectedness theorem with $W:=\mathrm{Sing}(V)$, one
deduces the following result.
\begin{theorem}\label{th: normal complete int implies irred}
If $V\subset\Pp^n$ is a normal complete intersection, then $V$ is
absolutely irreducible.
\end{theorem}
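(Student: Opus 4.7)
The plan is to apply the Hartshorne connectedness theorem quoted just before the statement, taken with $W:=\mathrm{Sing}(V)$, and then to convert the resulting topological connectedness of the smooth locus into absolute irreducibility via the local structure of $V$ at its regular points. View $V$ throughout as a $\cfq$--variety.

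Since $V$ is normal, $\mathrm{Sing}(V)$ has codimension at least $2$ in $V$ by definition, so Hartshorne's theorem gives that $V^{\mathrm{sm}}:=V\setminus\mathrm{Sing}(V)$ is connected in the Zariski topology of $\Pp^n$ over $\cfq$. Let $V=V_1\cup\cdots\cup V_s$ be the decomposition of $V$ into absolutely irreducible components (each of dimension $r$, since $V$ is equidimensional as a complete intersection).

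The key local fact is that at a regular point $x\in V$ the local ring $\mathcal{O}_{V,x}$ is a regular local ring and hence an integral domain, so exactly one component $V_i$ passes through $x$. Consequently the sets $U_i:=V_i\cap V^{\mathrm{sm}}$ are pairwise disjoint and cover $V^{\mathrm{sm}}$. Each $U_i$ is closed in $V^{\mathrm{sm}}$ (being the trace of the closed set $V_i\subset V$), and as one piece of a finite disjoint closed cover it is also open. Moreover, each $U_i$ is nonempty: the dense open subset $V_i\setminus\bigcup_{j\ne i}V_j$ of the irreducible $V_i$ meets the smooth locus of $V_i$, and at any such point $V$ coincides locally with $V_i$, so $V$ is smooth there.

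By the connectedness just established, the clopen partition $V^{\mathrm{sm}}=\bigsqcup_{i=1}^s U_i$ forces $s=1$, which is exactly the absolute irreducibility of $V$. The main subtlety is the passage from a regular point of $V$ to the existence of a \emph{unique} absolutely irreducible component through it; this is the input from commutative algebra (regular local rings are domains) that converts topological connectedness into irreducibility, and every other step is formal once it is in hand.
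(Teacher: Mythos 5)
Your proof is correct and follows exactly the route the paper intends: apply the Hartshorne connectedness theorem with $W:=\mathrm{Sing}(V)$ and deduce irreducibility from the connectedness of the smooth locus. The paper leaves this deduction implicit, and your argument (one absolutely irreducible component through each regular point since regular local rings are domains, hence a clopen partition of $V\setminus\mathrm{Sing}(V)$) is the standard way to fill it in.
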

%
%
\subsection{Rational points}
Let $\Pp^n(\fq)$ be the $n$--dimensional projective space over $\fq$
and let $\A^n(\fq)$ be the $n$--dimensional $\fq$--vector space
$\fq^n$. For a projective variety $V\subset\Pp^n$ or an affine
variety $V\subset\A^n$, we denote by $V(\fq)$ the set of
$\fq$--rational points of $V$, namely $V(\fq):=V\cap \Pp^n(\fq)$ or
$V(\fq):=V\cap \A^n(\fq)$ respectively.

For a projective variety $V$ of dimension $r$ and degree $\delta$,
we have (see \cite[Proposition 12.1]{GhLa02a} or \cite[Proposition
3.1]{CaMa07}):
 \begin{equation}\label{eq: upper bound -- projective gral}
   |V(\fq)|\leq \delta p_r.
 \end{equation}
On the other hand, if $V$ is an affine variety of dimension $r$ and
degree $\delta$, then (see, e.g., \cite[Lemma 2.1]{CaMa06})
 \begin{equation}\label{eq: upper bound -- affine gral}
   |V(\fq)|\leq \delta q^r.
 \end{equation}
%
%
\subsection{Multiprojective space}
\label{subsec: multiprojective space}
Let $\N:=\Z_{\ge 0}$ be the set of nonnegative integers. For
$\boldsymbol{n}:=(n_1\klk n_m)\in\N^m$, we define
$|\boldsymbol{n}|:=n_1\plp n_m$. 
%
Denote by $\Pp^{\boldsymbol{n}} :=\Pp^{\boldsymbol{n}}(\cfq)$ the
multiprojective space $\Pp^{\boldsymbol{n}}:=
\Pp^{n_1}\times\cdots\times \Pp^{n_m}$. For $1\le i\le m$, let
$\Gamma_i:=\{\Gamma_{i,0}\klk \Gamma_{i,n_i}\}$ be group of $n_i+1$
variables and let $\bfs \Gamma:=\{\Gamma_1\klk \Gamma_m\}$. For
$\K:=\cfq$ or $\K:=\fq$, a {\em multihomogeneous} polynomial of
$\K[\bfs \Gamma]$ of multidegree $\boldsymbol{d}:=(d_1\klk d_m)$ is
an element which is homogeneous of degree $d_i$ in $\Gamma_i$ for
$1\le i\le m$. An ideal $I\subset\K[\bfs \Gamma]$ is {\em
multihomogeneous} if it is generated by a family of multihomogeneous
polynomials. For any such ideal, we denote by
$V(I)\subset\Pp^{\boldsymbol{n}}$ the variety defined as its set of
common zeros. In particular, a hypersurface in
$\Pp^{\boldsymbol{n}}$ defined over $\K$ is the set of zeros of a
multihomogeneous polynomial of $\K[\bfs \Gamma]$. The notions of
irreducible variety and dimension of a subvariety of
$\Pp^{\boldsymbol{n}}$ are defined as in the projective space.
\subsubsection{Number of zeros of multihomogeneous hypersurfaces}
\label{subsec: zeros multihomogeneous pols}
With notations as above, let $\fq^{\boldsymbol{n}+\bfs
1}:=\fq^{n_1+1}\times\cdots \times \fq^{n_m+1}$. 
Let $F\in\cfq[\bfs\Gamma]$ be a multihomogeneous polynomial of
multidegree $\boldsymbol{d}:=(d_1\klk d_m)$. In this section we
establish two basic results concerning the number of $\fq$--rational
zeros of $F$. The first result is a nontrivial upper bound on the
number of zeros of $F$ in $\fq^{\boldsymbol{n}+\bfs 1}$, which
improves (\ref{eq: upper bound -- affine gral}) for multiprojective
hypersurfaces.

For $\boldsymbol{\alpha}\in\N^m$, we denote
$\boldsymbol{d}^{\boldsymbol{\alpha}}:=d_1^{\,\alpha_1}\cdots
d_m^{\,\alpha_m}$. Further, let
$$\eta_m(\boldsymbol{d},\boldsymbol{n})
:=\sum_{\boldsymbol{\varepsilon}\in\{0,1\}^m\setminus\{\boldsymbol{0}\}}
(-1)^{|\boldsymbol{\varepsilon}|+1}\boldsymbol{d}^{\,
\boldsymbol{\varepsilon}}
q^{|\boldsymbol{n}|+m-|\bfs\varepsilon|}.$$
Observe that $\eta_m(\boldsymbol{d},\boldsymbol{n})< q^{|\bfs
n|+m}=|\fq^{\boldsymbol{n}+\bfs 1}|$ if $q> \max_{1\le i\le m}d_i$,
while this inequality may not hold for $q\le \max_{1\le i\le m}d_i$.
We have the following result.
\begin{proposition}\label{prop: upper bound multihomogeneous}
Let $F\in\cfq[\bfs \Gamma]$ be a multihomogeneous polynomial of
multidegree $\boldsymbol{d}$ with $\max_{1\le i\le m}d_i<q$ and let
$N$ be the number of zeros of $F$ in $\fq^{\boldsymbol{n}+\bfs 1}$.
Then
$$N\le \eta_m(\boldsymbol{d},\boldsymbol{n}).$$
\end{proposition}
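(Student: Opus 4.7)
The plan is to proceed by induction on the number $m$ of groups of variables, using the Schwartz–Zippel style bound $d q^n$ for the zeros of a polynomial of degree $d$ in $n+1$ variables (valid because $d<q$) as the base case. For $m=1$ we have $\eta_1(d_1,n_1)=d_1q^{n_1}$, which matches this bound.

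For the inductive step, I would split off the last group of variables $\Gamma_m$. Writing $F=\sum_{\alpha}F_\alpha(\Gamma_1,\ldots,\Gamma_{m-1})\,\Gamma_m^\alpha$ with the sum ranging over monomials of degree $d_m$ in $\Gamma_m$, for each specialization $\bfs\gamma_m\in\fq^{n_m+1}$ the polynomial
\[
F_{\bfs\gamma_m}(\Gamma_1,\ldots,\Gamma_{m-1}):=F(\Gamma_1,\ldots,\Gamma_{m-1},\bfs\gamma_m)
\]
is multihomogeneous of multidegree $(d_1,\ldots,d_{m-1})$. Let $M$ be the number of $\bfs\gamma_m\in\fq^{n_m+1}$ for which $F_{\bfs\gamma_m}$ vanishes identically. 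Each coefficient of $F_{\bfs\gamma_m}$ (as a polynomial in $\Gamma_1,\ldots,\Gamma_{m-1}$) is a homogeneous polynomial of degree $d_m$ in $\bfs\gamma_m$, and since $F$ is not identically zero at least one such coefficient is nonzero; applying the base case bound to that single coefficient polynomial gives $M\le d_m q^{n_m}$. For the remaining $q^{n_m+1}-M$ specializations, the inductive hypothesis provides the bound $\eta_{m-1}(\boldsymbol{d}',\boldsymbol{n}')$ on the number of zeros in $\fq^{\boldsymbol{n}'+\bfs 1}$, where $\boldsymbol{d}':=(d_1,\ldots,d_{m-1})$ and $\boldsymbol{n}':=(n_1,\ldots,n_{m-1})$. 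Combining,
\[
N\le M\,q^{|\boldsymbol{n}'|+m-1}+(q^{n_m+1}-M)\,\eta_{m-1}(\boldsymbol{d}',\boldsymbol{n}').
\]
Since $\eta_{m-1}(\boldsymbol{d}',\boldsymbol{n}')\le q^{|\boldsymbol{n}'|+m-1}$ (using $d_i<q$), the coefficient of $M$ is nonnegative, so substituting $M\le d_m q^{n_m}$ yields
\[
N\le d_m q^{|\boldsymbol{n}|+m-1}+\bigl(q^{n_m+1}-d_m q^{n_m}\bigr)\,\eta_{m-1}(\boldsymbol{d}',\boldsymbol{n}').
\]

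The last step is purely bookkeeping: expand $\eta_{m-1}(\boldsymbol{d}',\boldsymbol{n}')$ and multiply out. The factor $q^{n_m+1}$ converts a term indexed by $\boldsymbol{\varepsilon}'\in\{0,1\}^{m-1}\setminus\{\boldsymbol{0}\}$ into the $\boldsymbol{\varepsilon}=(\boldsymbol{\varepsilon}',0)$ term of $\eta_m(\boldsymbol{d},\boldsymbol{n})$, while the factor $-d_m q^{n_m}$ converts it into the $\boldsymbol{\varepsilon}=(\boldsymbol{\varepsilon}',1)$ term (the sign flips absorbing the minus). The isolated summand $d_m q^{|\boldsymbol{n}|+m-1}$ contributes precisely the missing index $\boldsymbol{\varepsilon}=(0,\ldots,0,1)$, so all $2^m-1$ terms of $\eta_m(\boldsymbol{d},\boldsymbol{n})$ are recovered exactly. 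The main obstacle is not conceptual but combinatorial: one must verify this sign/index matching carefully so that the right-hand side reassembles as $\eta_m(\boldsymbol{d},\boldsymbol{n})$ and not a weaker quantity, which is why the hypothesis $\max_i d_i<q$ (ensuring $\eta_{m-1}\le q^{|\boldsymbol{n}'|+m-1}$) is used to justify substituting the upper bound for $M$.
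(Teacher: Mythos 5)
Your proposal is correct and follows essentially the same route as the paper: induction on $m$, bounding the number of specializations $\bfs\gamma_m$ that kill $F$ by $d_mq^{n_m}$ via a nonzero coefficient polynomial, using $\eta_{m-1}\le q^{|\boldsymbol{n}'|+m-1}$ (a consequence of $d_i<q$) to justify substituting that bound, and reassembling $\eta_m(\boldsymbol{d},\boldsymbol{n})$ by the inclusion--exclusion bookkeeping. The only cosmetic difference is that the paper fixes one good specialization to invoke the inductive hypothesis and then sums, while you bound all good specializations uniformly; the content is identical.
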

\begin{proof}
We argue by induction on $m$. The case $m=1$ is (\ref{eq: upper
bound -- affine gral}).

Suppose that the statement holds for $m-1$ and let $F\in\cfq[\bfs
\Gamma]$ be an $m$--homogeneous polynomial of multidegree
$\boldsymbol{d}:=(d_1\klk d_m)$. Let $N$ be the number of zeros of
$F$ in $\fq^{\boldsymbol{n}+\bfs 1}$, and let $Z_m$ be the set of
$\gamma_m$ in $\fq^{n_m+1}$ such that the substitution
$F(\Gamma_1\klk \Gamma_{m-1},\gamma_m)$ of $\gamma_m$ for $\Gamma_m$
in $F$ yields the zero polynomial of
$\cfq[\Gamma_1\klk\Gamma_{m-1}]$. Consider $F$ as an element of
$\cfq[\Gamma_m][\Gamma_1\klk\Gamma_{m-1}]$ and let
$A\in\cfq[\Gamma_m]$ be a nonzero homogeneous polynomial of degree
$d_m$ which occurs as the coefficient of a monomial
$\Gamma_1^{\boldsymbol{\alpha}_1}\cdots
\Gamma_{m-1}^{\boldsymbol{\alpha}_{m-1}}$ in the dense
representation of $F$. As $Z_m$ is contained in the set of zeros in
$\fq^{n_m+1}$ of $A$, by (\ref{eq: upper bound -- affine gral}) we
have $|Z_m|\le d_mq^{n_m}$.

Since $d_m<q$ by hypothesis, it follows that $|Z_m|\le
d_mq^{n_m}<q^{n_m+1}=|\fq^{n_m+1}|$, which implies that
$\fq^{n_m+1}\setminus Z_m$ is nonempty. Fix
$\gamma_m\in\fq^{n_m+1}\setminus Z_m$ and denote by $N_{m-1}$ the
number of zeros of $F(\Gamma_1\klk \Gamma_{m-1},\gamma_m)$ in
$\fq^{n_1+1}\times\cdots\times \fq^{n_{m-1}+1}$. By the inductive
hypothesis and the fact that $\max_{1\le i\le m-1}d_i< q$, we see
that
$$N_{m-1}\le \eta_{m-1}(\boldsymbol{d}^*,\boldsymbol{n}^*)<
q^{|\bfs n^*|+m-1},$$
where $\boldsymbol{d}^*:=(d_1\klk d_{m-1})$ and
$\boldsymbol{n}^*:=(n_1\klk n_{m-1})$. As a consequence,
\begin{align*}
N &\le |Z_m| q^{|\bfs n^*|+m-1}+(q^{n_m+1}-|Z_m|)
\eta_{m-1}(\boldsymbol{d}^*,\boldsymbol{n}^*)\\
&= |Z_m|\left(q^{|\bfs n^*|+m-1}
-\eta_{m-1}(\boldsymbol{d}^*,\boldsymbol{n}^*)\right)+
\eta_{m-1}(\boldsymbol{d}^*,\boldsymbol{n}^*) q^{n_m+1}\le
\eta_m(\boldsymbol{d},\boldsymbol{n}).
\end{align*}
This completes the proof of the proposition.
\end{proof}

The second result is concerned with conditions of existence of a
point of $\Pp^{\boldsymbol{n}}(\fq)$ which does not annihilates $F$
and will be used to obtain an effective version of the Bertini
smoothness theorem 
(Theorem \ref{th: definition H}).

\begin{corollary}\label{coro: existencia no-cero pol multih}
Let $F\in\cfq[\bfs \Gamma]$ be a multihomogeneous polynomial of
multidegree $\bfs d$ and let $d:=\max_{1\le i\le m}d_i$. If $q>d$,
then there exists $\bfs \gamma\in \Pp^{\boldsymbol{n}}(\fq)$ with
$F(\bfs \gamma)\not=0$.
\end{corollary}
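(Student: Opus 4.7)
The plan is to read the corollary as the almost-immediate complement of Proposition \ref{prop: upper bound multihomogeneous}: the bound on the number of affine $\fq$-rational zeros produced there is already strictly below the total size of $\fq^{\boldsymbol{n}+\bfs 1}$ under the hypothesis $q>d$, and a non-annihilating affine tuple is easily promoted to a point of the multiprojective space. No new ingredient is required beyond the proposition and a short arithmetic identity.

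First I would apply Proposition \ref{prop: upper bound multihomogeneous}, whose hypothesis $\max_i d_i<q$ is exactly the assumption $q>d$: the number $N$ of zeros of $F$ in $\fq^{\boldsymbol{n}+\bfs 1}$ satisfies $N\le \eta_m(\bfs d,\bfs n)$. Next I would verify the closed-form identity
$$q^{|\bfs n|+m}-\eta_m(\bfs d,\bfs n)=\prod_{i=1}^m q^{n_i}(q-d_i),$$
which follows by expanding the product and matching the signed sum indexed by $\bfs\varepsilon\in\{0,1\}^m$ that defines $\eta_m$. Since $q>d\ge d_i$ for every $i$, each factor $q-d_i$ is at least $1$, so the right-hand side is strictly positive; equivalently $N\le\eta_m(\bfs d,\bfs n)<q^{|\bfs n|+m}=|\fq^{\boldsymbol{n}+\bfs 1}|$, so there exists $\bfs\gamma=(\gamma_1,\ldots,\gamma_m)\in\fq^{\boldsymbol{n}+\bfs 1}$ with $F(\bfs\gamma)\ne 0$. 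This is exactly the observation already recorded right after the definition of $\eta_m$.

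The only remaining issue is descent to $\Pp^{\boldsymbol{n}}(\fq)$, which is essentially a bookkeeping step: if $d_i>0$, then multihomogeneity of $F$ in $\Gamma_i$ forces $\gamma_i\ne 0$ in $\fq^{n_i+1}$, for otherwise $F(\bfs\gamma)$ would vanish; if instead $d_i=0$, then $F$ does not involve $\Gamma_i$ at all and we are free to replace $\gamma_i$ by any nonzero vector of $\fq^{n_i+1}$ without altering the value $F(\bfs\gamma)\ne 0$. In either case $\bfs\gamma$ determines a well-defined point of $\Pp^{\boldsymbol{n}}(\fq)$ at which $F$ is nonzero, proving the corollary. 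There is no real obstacle; the content of the statement is concentrated in Proposition \ref{prop: upper bound multihomogeneous}.
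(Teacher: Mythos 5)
Your argument is correct and is essentially the paper's own proof: both apply Proposition \ref{prop: upper bound multihomogeneous} under the hypothesis $q>d$ and rewrite $q^{|\bfs n|+m}-\eta_m(\bfs d,\bfs n)$ as the positive product $\prod_{i=1}^m q^{n_i}(q-d_i)$ to produce an affine tuple not annihilating $F$. Your extra paragraph handling the passage from $\fq^{\bfs n+\bfs 1}$ to $\Pp^{\bfs n}(\fq)$ (including the degenerate case $d_i=0$) simply spells out the step the paper compresses into ``it suffices to show.''
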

\begin{proof}
It suffices to show that there exists $\bfs \gamma'\in\fq^{\bfs
n+\bfs 1}$ with $F(\bfs \gamma')\not=0$. Let $N$ be the number of
zeros of $F$ in $\fq^{\boldsymbol{n}+\bfs 1}$. According to
Proposition \ref{prop: upper bound multihomogeneous}, the number
$N_{\ne 0}$ of elements in $\fq^{\boldsymbol{n}+\bfs 1}$ not
annihilating $F$ is bounded as follows:
$$N_{\ne 0}\ge q^{|\bfs n|+m}-
\eta_m(\boldsymbol{d},\boldsymbol{n}) =
\sum_{\boldsymbol{\varepsilon}\in\{0,1\}^m}
(-1)^{|\boldsymbol{\varepsilon}|}\boldsymbol{d}^{\,\boldsymbol{\varepsilon}}
q^{|\bfs n|+m-\boldsymbol{\varepsilon}}=
\prod_{i=1}^m(q^{n_i+1}-d_iq^{n_i}).$$
Since $q>d$, we have $q^{n_i+1}>d_iq^{n_i-1}$ for $1\le i\le m$,
which yields the corollary.
\end{proof}
%
%
\section{On the existence of nonsingular linear sections}
\label{section: Bertini}
In this section we establish a Bertini--type theorem, namely we show
the existence of nonsingular linear sections of a singular complete
intersection. The Bertini smoothness theorem asserts that a generic
hyperplane section of a nonsingular variety $V$ is nonsingular. A
more precise variant of this result asserts that, if $V\subset\Pp^n$
is a projective variety with singular locus of dimension at most
$s$, then the section of $V$ defined by a generic linear space of
$\Pp^n$ of codimension at least $s+1$ is nonsingular (see, e.g.,
\cite[Proposition 1.3]{GhLa02a}). Identifying each section of this
type with a point in the multiprojective space $(\Pp^n)^{s+1}$, we
show the existence of a hypersurface
$\mathcal{H}\subset(\Pp^n)^{s+1}$ containing all the linear
subvarieties of codimension $s+1$ of $(\Pp^n)^{s+1}$ which yield
singular sections of $V$. We also estimate the multidegree of this
hypersurface.

Let $V \subset \Pp^n$ be a complete intersection defined by
homogeneous polynomials $F_1\klk F_{n-r} \in \fq[\xon]$ of degrees
$d_1\ge\cdots\ge d_{n-r}\ge 2$ respectively.  Let
$\Sigma:=\mathrm{Sing}\,V$ and suppose that there exists $s$ with
$0\le s\le r-2$ such that $\dim\Sigma\le s$. In particular, $V$ is a
normal complete intersection, and therefore absolutely irreducible
(Theorem \ref{th: normal complete int implies irred}). We denote by
$V_{\rm sm}:=V\setminus\Sigma$ the smooth locus of $V$. Finally, set
$\delta:=\deg V=d_1\cdots d_{n-r}$ and $D:=\sum_{i=1}^{n-r}(d_i-1)$.

Set $X:=(X_0\klk X_n)$. For $\mu:=(\mu_0:\dots:\mu_n)\in\Pp^n$, we
shall use the notation $\mu\cdot X:=\mu_0X_0\plp\mu_nX_n$. Let
$\boldsymbol{\gamma}:=(\gamma_0, \dots, \gamma_s)\in(\Pp^n)^{s+1}$,
where $\gamma_0\klk\gamma_s$ are $\cfq$--linearly independent, and
consider the linear variety $\mathcal{L} \subset \Pp^n$ defined by
$$\mathcal{L}:=\{\bfs\gamma\cdot x=0\}:=\{x\in\Pp^n:\gamma_0\cdot
x=\cdots=\gamma_s\cdot x=0\}.$$
Our goal is to prove the existence of a hypersurface $\mathcal{H}
\subset (\Pp^n)^{s+1}$ with the following property: if $\bfs
\gamma\in (\Pp^n)^{s+1} \setminus \mathcal{H}$ and
$\mathcal{L}:=\{\bfs \gamma\cdot x=0\}$, then $V \cap \mathcal{L}$
is nonsingular of pure dimension $r-s-1$.

Let $\Gamma_i:=(\Gamma_{i,0}\klk \Gamma_{i,n})$ be a group of $n+1$
variables for $0\le i\le s$ and denote
$\boldsymbol{\Gamma}:=(\Gamma_0\klk\Gamma_s)$. We consider the
incidence variety
\begin{align*}
\mathcal{W}& :=(V_{\rm sm} \times \mathcal{U})\cap\{\Gamma_0\cdot
X=0,\dots,\Gamma_{s}\cdot X=0, \Delta_1(\boldsymbol{\Gamma},X)
=0,\dots,\Delta_m(\boldsymbol{\Gamma},X) =0\},
\end{align*}
where $\mathcal{U}\subset(\Pp^n)^{s+1}$ is the Zariski open subset
of  $(s+ 1)\times (n+1)$-matrices of maximal rank and
$\Delta_1\klk\Delta_m$ are the maximal minors of the matrix
\begin{equation}\label{eq: matrix with gammas}
\mathcal{M}(X,\boldsymbol{\Gamma}):=
     \begin{pmatrix}
            \frac{\partial F_1}{\partial X_0} &
            \dots & \frac{\partial F_1}{\partial X_n}
            \cr \vdots & & \vdots \cr
            \frac{\partial F_{n-r}}{\partial X_0} &
            \dots & \frac{\partial F_{n-r}}{\partial X_n} \cr
            \Gamma_{0,0} & \dots & \Gamma_{0,n} \cr
            \vdots & & \vdots \cr
            \Gamma_{s,0} & \dots & \Gamma_{s,n}
     \end{pmatrix}.
\end{equation}
Let $l:=n(s+1)$. The following result summarizes the main properties
of the incidence variety $\mathcal{W}$ we shall use.
\begin{proposition}\label{prop: W es irreducible}
$\mathcal{W}$ is an absolutely irreducible subvariety of
$V_{\mathrm{sm}}\times \mathcal{U}$ of dimension $l-1$.
\end{proposition}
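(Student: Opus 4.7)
The plan is to analyze the first projection $\pi\colon\mathcal{W}\to V_{\mathrm{sm}}$ and invoke the standard fact that a morphism onto an absolutely irreducible base whose geometric fibers are all absolutely irreducible of the same dimension $d$ has absolutely irreducible total space of dimension $\dim(\text{base})+d$ (see, e.g., \cite[Ch.~I, \S6]{Shafarevich94}). Concretely, I need (a)~that $V_{\mathrm{sm}}$ is absolutely irreducible of dimension $r$, and (b)~that each fiber $\pi^{-1}(x)$ is absolutely irreducible of dimension $l-1-r$.

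Step (a) is immediate from Theorem~\ref{th: normal complete int implies irred}: the hypothesis $\dim\mathrm{Sing}\,V\le s\le r-2$ makes $V$ a normal complete intersection, hence absolutely irreducible, and $V_{\mathrm{sm}}$ is the nonempty Zariski-open complement of $\mathrm{Sing}\,V$. For (b), I would fix $x\in V_{\mathrm{sm}}$ and introduce the two natural subspaces of $\cfq^{n+1}$: the hyperplane $W_x:=\{y:y\cdot x=0\}$ of dimension $n$, and the row span $J_x$ of the Jacobian of $F_1,\dots,F_{n-r}$ at $x$. Smoothness gives $\dim J_x=n-r$, and Euler's identity $\sum_j x_j\,\partial F_i/\partial X_j(x)=d_iF_i(x)=0$ yields $J_x\subset W_x$. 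The defining conditions of $\mathcal{W}$ then translate as follows: $\gamma_i\cdot x=0$ forces $\gamma_i\in\Pp(W_x)$, and, since the first $n-r$ rows of $\mathcal{M}(x,\bfs\Gamma)$ already span $J_x$, the simultaneous vanishing of the maximal minors $\Delta_j$ is equivalent to the images of $\gamma_0,\dots,\gamma_s$ in the $r$-dimensional quotient $W_x/J_x$ spanning a subspace of dimension at most $s$. Fixing a linear splitting $W_x\cong J_x\oplus (W_x/J_x)$, the affine cone over $\pi^{-1}(x)$ becomes, up to the removal of a proper closed subset (coming from some $\gamma_i=0$ and from the $\mathcal{U}$-condition), the product of $J_x^{s+1}$ with the classical determinantal variety of $(s+1)\times r$ matrices of rank at most~$s$. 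The latter is absolutely irreducible of codimension $(s+1-s)(r-s)=r-s$, provided $s+1\le r$, which is guaranteed by $s\le r-2$; quotienting by the coordinate-wise $(\mathbb{G}_m)^{s+1}$-action preserves absolute irreducibility, and a dimension count yields $\dim\pi^{-1}(x)=n(s+1)-(r-s)-(s+1)=l-1-r$, as required.

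The main obstacle I expect is the rank analysis of $\mathcal{M}(x,\bfs\gamma)$: the key observation will be that, because $x$ is a smooth point of $V$, the first $n-r$ rows of $\mathcal{M}$ are already of full rank $n-r$ with row span inside $W_x$, so that the vanishing of \emph{all} maximal minors is not an abstract rank drop but precisely the condition of linear dependence of $\gamma_0,\dots,\gamma_s$ modulo $J_x$. Once this reformulation is in hand, the irreducibility and codimension of the generic determinantal variety are classical, and the assembly over $V_{\mathrm{sm}}$ via the fiber--dimension/irreducibility theorem is routine.
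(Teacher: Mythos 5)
Your plan is essentially the route the paper itself takes: the paper also fibers $\mathcal{W}$ over $V_{\rm sm}$ by the first projection, uses smoothness of $x$ to see that $\nabla F_1(x),\dots,\nabla F_{n-r}(x)$ span an $(n-r)$-dimensional subspace of $\{y:y\cdot x=0\}$ (your Euler-identity observation is exactly why the gradients lie in that hyperplane), identifies the affine cone of the fiber, modulo that subspace, with an open subset of the determinantal variety of maps $\A^{s+1}\to\mathbb{W}$ of rank at most $s$ into the $r$-dimensional quotient $\mathbb{W}$, quotes \cite[Proposition 1.1]{BrVe88} for its absolute irreducibility and its dimension $s(r+1)$, deduces $\dim\pi_1^{-1}(x)=l-r-1$, and concludes by combining an irreducibility-from-fibers argument with the theorem on the dimension of fibers. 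Your reformulation via a splitting into $J_x^{s+1}$ times the rank-$\le s$ locus of $(s+1)\times r$ matrices, the codimension count $r-s$, and the use of $s\le r-2$ all agree with the paper's computation.

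The one genuine gap is the blanket appeal to the ``standard fact'' that an irreducible base with irreducible equidimensional fibers forces an irreducible total space. As stated this is false for arbitrary morphisms: \cite[\S I.6.3, Theorem 8]{Shafarevich94} is proved for projective varieties, and its proof uses that the map is closed, so that the images of the irreducible components of the total space are closed and one of them must be the whole (irreducible) base. A cheap counterexample without closedness is the union of the punctured line $(\A^1\setminus\{0\})\times\{0\}$ and the point $(0,1)$ in $\A^2$, projected to $\A^1$: the base is irreducible and every fiber is a single point, yet the source is reducible. In the present situation $\mathcal{W}$ is closed only in $V_{\rm sm}\times\mathcal{U}$, which is quasi-projective and not complete, so closedness of $\pi_1$ is not automatic; the paper devotes a separate step to precisely this point, arguing that the projection $V_{\rm sm}\times\mathcal{U}\to V_{\rm sm}$ is closed (via completeness considerations involving the map from $\mathcal{U}$ to the Grassmannian $\mathbb{G}(s,n)$), concluding that some irreducible component of $\mathcal{W}$ maps \emph{onto} $V_{\rm sm}$, and only then following the proof of \cite[\S I.6.3, Theorem 8]{Shafarevich94} \emph{mutatis mutandis}. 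To complete your sketch you must supply this ingredient, or some substitute (for instance, showing directly that a component of $\mathcal{W}$ dominating $V_{\rm sm}$ must contain the full fiber over every point of $V_{\rm sm}$); everything else in your outline matches the paper's argument.
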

\begin{proof}
Let $\pi_1:\mathcal{W}\to V_{\rm sm}$ be the mapping
$\pi_1(x,\boldsymbol{\gamma}):= x$. Fix $x\in V_{\rm sm}$ and
consider the fiber $\pi_1^{-1}(x)$. We have
$\pi_1^{-1}(x)=\{x\}\times \Omega$, where $\Omega \subset
\mathcal{U}$ is the set of
$\boldsymbol{\gamma}:=(\gamma_0,\dots,\gamma_{s})$ such that
$\gamma_0\cdot x=\cdots=\gamma_{s}\cdot x=0$ and the matrix $
\mathcal{M}(x,\bfs \gamma)$ is not of full rank. The latter
condition is equivalent to
\begin{equation}\label{eq: cond var incidencia singularidad multih}
\langle\gamma_0,\dots,\gamma_{s}\rangle \cap \big\langle\nabla
F_1(x)\klk\nabla F_{n-r}(x)\big\rangle \neq \{\boldsymbol{0}\} ,
\end{equation}
where $\langle v_0\klk v_m\rangle \subset \A^{n+1}$ denotes the
linear variety spanned by $v_0\klk v_m$ in $\A^{n+1}$. Let
$\mathbb{V}:=\{v\in\A^{n+1}:v\cdot x=0\}$. Observe that $\nabla
F_j(x)\in\mathbb{V}$ for $1\le j\le n-r$. Then (\ref{eq: cond var
incidencia singularidad multih}) holds if and only if
$\gamma_0\klk\gamma_{s}$ are not linearly independent in the
quotient $\cfq$--vector space
$$
\mathbb{W}:=\mathbb{V}/\langle\nabla F_1(x)\klk\nabla
F_{n-r}(x)\rangle.
$$
This shows that the affine cone of $\Omega$ is, modulo
$\langle\nabla F_1(x)\klk \nabla F_{n-r}(x)\rangle$, isomorphic to
the Zariski open set $L_{s}'(\A^{s+1},\mathbb{W})\cap
\Phi(\mathcal{U}_{\rm aff})$ of $L_{s}'(\A^{s+1},\mathbb{W})$, where
$$
L_{s}'(\A^{s+1},\mathbb{W}):=\{f\in{\rm
Hom}_{\cfq}(\A^{s+1},\mathbb{W}):{\rm rank}(f)\le s\},
$$
$\mathcal{U}_{\rm aff}\subset (\A^{n+1})^{s+1}$ is the multi--affine
cone of $\mathcal{U}$ and $\Phi:{\rm
Hom}_{\cfq}(\A^{s+1},\A^{n+1})\to {\rm
Hom}_{\cfq}(\A^{s+1},\mathbb{W})$ is the surjective map induced by
the quotient map $\A^{n+1}\to\mathbb{W}$.

According to \cite[Proposition 1.1]{BrVe88},
$L_{s}'(\A^{s+1},\mathbb{W})$ is an absolutely irreducible variety
of dimension $s(r+1)$.  Since we are considering subspaces of
$\mathbb{V}$ of dimension $s+1$ modulo $\langle\nabla
F_1(x)\klk\nabla F_{n-r}(x)\rangle$, which has dimension $n-r$
because $x\in V_{\rm sm}$, it follows that the affine cone of
$\pi_1^{-1}(x)=\{x\}\times \Omega$ is an open dense subset of an
irreducible variety of $V_{\rm sm}\times \mathcal{U}_{\rm aff}$ of
dimension $s(r+1)+(n-r)(s+1)=l+s-r$. This implies that
$\pi_1^{-1}(x)=\{x\}\times \Omega$ is an irreducible subvariety of
$V_{\rm sm}\times\mathcal{U}$ of dimension $l+s-r-(s+1)=l-r-1$.

We claim that the projection $V_{\rm sm}\times \mathcal{U}\to V_{\rm
sm}$ is closed. Indeed, this is the case if $\mathcal{U}$ is a
complete variety (see, e.g., \cite[Chapter 2, \S 3]{Danilov94}). It
is well--known that a projective variety is complete (see, e.g.,
\cite[Chapter 2, \S 3.3]{Danilov94}). Furthermore, if there exists a
proper map from a quasiprojective variety to a complete variety,
then the former is complete (see, e.g., \cite[Chapter 2, \S
3.2]{Danilov94}). In our case, it is not hard to see that the
mapping $\mathcal{U}\to\mathbb{G}(s,n)$ defined by the Pl\"ucker
coordinates is proper. Since $\mathbb{G}(s,n)$ is a projective
variety and thus complete, the claim follows.

Let $\mathcal{W}=\bigcup_{j}\mathcal{C}_j$ be the decomposition of
$\mathcal{W}$ into irreducible components. Our previous arguments
show that $\pi_1: \mathcal{W}\to V_{\rm sm}$ is surjective. Then
$\pi_1(\mathcal{W})=V_{\rm sm}=\bigcup_{j}\pi_1(\mathcal{C}_j)$ and
each $\pi_1(\mathcal{C}_j)$ is a closed subset of $V_{\rm sm}$.
Since $V$ is a normal complete intersection, and thus absolutely
irreducible (Theorem \ref{th: normal complete int implies irred}),
it follows that $V_{\rm sm}$ is absolutely irreducible and there
exists $j$ with $V_{\rm sm}=\pi_1(\mathcal{C}_j)$.

Now the proof follows {\em mutatis mutandis} the second and third
paragraph of the proof of \cite[\S I.6.3, Theorem 8]{Shafarevich94}
to conclude that $\mathcal{W}$ is an absolutely irreducible
subvariety of $V_{\rm sm}\times\mathcal{U}$.

Finally, by the Theorem on the dimension of fibers (see, e.g.,
\cite[\S I.6.3, Theorem 7]{Shafarevich94}), for any $x\in V_{\rm
sm}$ we have
$$l-r-1=\dim \pi_1^{-1}(x)=\dim \mathcal{W}-\dim V_{\rm sm}=\dim \mathcal{W}-r.$$
This shows that $\mathcal{W}$ has dimension $l-1$ and finishes the
proof.
\end{proof}

An immediate consequence of Proposition \ref{prop: W es irreducible}
is that the Zariski closure of the image of the projection
$\pi_2:\mathcal{W}\to\mathcal{U}$ on the second argument is an
absolutely irreducible variety of dimension at most $l-1$. Our
interest in the set $\pi_2(\mathcal{W})$ is based on the following
lemma.
\begin{lemma}\label{lemma: props section notin pi_2(W)}
If $\bfs\gamma\in\mathcal{U}\setminus\pi_2(\mathcal{W})$, then the
linear section $V_{\rm sm}\cap \mathcal{L}$ defined by
$\mathcal{L}:=\{\bfs\gamma\cdot x=0\}$ is nonsingular of pure
dimension $r-s-1$.
\end{lemma}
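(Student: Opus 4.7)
The plan is to deduce nonsingularity of $V_{\rm sm}\cap\mathcal{L}$ directly from the Jacobian criterion, exploiting the fact that the condition $\bfs\gamma\notin\pi_2(\mathcal{W})$ is precisely what forces the Jacobian of the defining equations of $V\cap\mathcal{L}$ to have full rank at every point $x\in V_{\rm sm}\cap\mathcal{L}$.

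First I would fix an arbitrary $x\in V_{\rm sm}\cap\mathcal{L}$ and observe that the pair $(x,\bfs\gamma)$ satisfies every condition appearing in the definition of $\mathcal{W}$ with the possible exception of the vanishing of the maximal minors $\Delta_1,\ldots,\Delta_m$ of $\mathcal{M}(X,\bfs\Gamma)$: indeed, $x\in V_{\rm sm}$, $\bfs\gamma\in\mathcal{U}$, and $\gamma_i\cdot x=0$ for $0\le i\le s$. Since $\bfs\gamma\notin\pi_2(\mathcal{W})$ forces $(x,\bfs\gamma)\notin\mathcal{W}$, the matrix $\mathcal{M}(x,\bfs\gamma)$ must have full rank $n-r+s+1$.

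Next I would translate this rank statement into a tangent space computation. The matrix $\mathcal{M}(x,\bfs\gamma)$ is the Jacobian at $x$ of the homogeneous system $F_1,\ldots,F_{n-r},\gamma_0\cdot X,\ldots,\gamma_s\cdot X$ cutting out $V\cap\mathcal{L}$ in $\Pp^n$. Passing to the affine cone, the tangent space at a representative of $x$ is $\ker\mathcal{M}(x,\bfs\gamma)$, of dimension $(n+1)-(n-r+s+1)=r-s$, so that $\dim\mathcal{T}_x(V\cap\mathcal{L})=r-s-1$ projectively. On the other hand, because $V$ has pure dimension $r$ and $\mathcal{L}$ is the intersection of $s+1$ hyperplanes, the standard projective dimension inequality gives $\dim_x(V\cap\mathcal{L})\ge r-s-1$; combining this with the general bound $\dim_x(V\cap\mathcal{L})\le\dim\mathcal{T}_x(V\cap\mathcal{L})$ recalled in Section \ref{section: notions and notations} yields $\dim_x(V\cap\mathcal{L})=r-s-1=\dim\mathcal{T}_x(V\cap\mathcal{L})$, so $x$ is a regular point.

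For the pure dimension assertion, every irreducible component of $V_{\rm sm}\cap\mathcal{L}$ contains some $x$, and the previous step shows that the local dimension at $x$ is exactly $r-s-1$. Hence each component has dimension $r-s-1$, and the lemma follows. The proof is essentially an unwinding of the definition of $\mathcal{W}$; the only conceptual step requiring care is identifying the full-rank property of $\mathcal{M}(x,\bfs\gamma)$ with smoothness of $V\cap\mathcal{L}$ at $x$ via the Jacobian criterion in projective space, and verifying that no singular component of $V\cap\mathcal{L}$ of dimension greater than $r-s-1$ can meet $V_{\rm sm}$.
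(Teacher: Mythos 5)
Your proof is correct and takes essentially the same route as the paper: the paper likewise deduces from $(x,\bfs\gamma)\notin\mathcal{W}$ that $\mathcal{M}(x,\bfs\gamma)$ has maximal rank, concludes that $\dim\mathcal{T}_x(V\cap\mathcal{L})= r-s-1$ (phrased there as transversality of $V$ and $\mathcal{L}$ via \cite[Lemma 1.1]{GhLa02a}), and obtains pure dimension from the same two ingredients, namely the lower bound $r-s-1$ on the dimension of every component and the tangent-space upper bound. One small precision: since the listed equations $F_1,\dots,F_{n-r},\gamma_0\cdot X,\dots,\gamma_s\cdot X$ need not generate $I(V\cap\mathcal{L})$, a priori you only have $\mathcal{T}_x(V\cap\mathcal{L})\subseteq\ker\mathcal{M}(x,\bfs\gamma)$ rather than equality, but this containment is all your chain of inequalities uses, so the argument stands.
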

\begin{proof}
Fix $\bfs\gamma\in\mathcal{U}\setminus \pi_2(\mathcal{W})$ and
denote $\mathcal{L}:=\{\bfs\gamma\cdot x=0\}$. According to
\cite[Lemma 1.1]{GhLa02a}, $\mathrm{Sing}(V_{\rm sm}\cap
\mathcal{L})=N(V_{\rm sm},\mathcal{L})$, where $N(V_{\rm
sm},\mathcal{L})$ is the set of points $x \in V_{\rm sm}$ where $V$
and $\mathcal{L}$ do not meet transversely, that is,
$\mathrm{dim}\mathcal{T}_xV \cap \mathcal{L}> \mathrm{dim}
\mathcal{T}_x V - \mathrm{codim} \mathcal{L} = r-s-1$. For $x\in
V_{\rm sm}\cap\mathcal{L}$, we have $(x,\boldsymbol{\gamma})\notin
\mathcal{W}$ and then $\mathcal{M}(x,\boldsymbol{\gamma})$ has
maximal rank, where $\mathcal{M}(X,\boldsymbol{\Gamma})$ is the
matrix of (\ref{eq: matrix with gammas}). As a consequence,
$\mathrm{dim}\mathcal{T}_xV \cap \mathcal{L}= r-s-1$. This implies
that $V$ and $\mathcal{L}$ meet transversely at $x$, and hence $x$
is a nonsingular point of $V_{\rm sm}\cap \mathcal{L}$. This shows
that $V_{\rm sm}\cap \mathcal{L}$ is nonsingular.

By \cite[\S I.6.2, Corollary 5]{Shafarevich94}, each irreducible
component of $V_{\rm sm}\cap \mathcal{L}$ has dimension at least
$r-s-1$. For $x\in V_{\rm sm}\cap \mathcal{L}$, the matrix
$\mathcal{M}(x,\boldsymbol{\gamma})$ has maximal rank. Hence, $\dim
\mathcal{T}_x( V\cap \mathcal{L})\leq r-s-1$, which implies that
each irreducible component of $V_{\rm sm}\cap\mathcal{L}$ containing
$x$ has dimension at most $r-s-1$. We conclude that $V_{\rm
sm}\cap\mathcal{L}$ is of pure dimension $r-s-1$.
\end{proof}

We shall show that the set $\pi_2(\mathcal{W})$ is contained in a
hypersurface of $(\Pp^n)^{s+1}$ of ``low'' degree. Denote
$L_\Gamma:=\{\Gamma_0\cdot X=0,\dots,\Gamma_{s}\cdot
X=0\}\subset(\Pp^n)^{s+2}$ and let
$\mathcal{W}''\subset(\Pp^n)^{s+2}$ be the following variety:
\begin{equation}\label{eq: definition W''}
\mathcal{W}'':=\mathcal{W}\cup\big((\Sigma\times(\Pp^n)^{s+1})\cap
L_\Gamma\big)\cup\big((V\times ((\Pp^n)^{s+1}\setminus
\mathcal{U}))\cap L_\Gamma\big).
\end{equation}
We have the following result.
\begin{lemma}\label{lemma: props W''}
The variety $\mathcal{W}''$ has dimension $l-1$ and the following
identity holds:
\begin{equation}\label{eq: W'' - identity}
\mathcal{W}''=\big(\big(V\times (\Pp^n)^{s+1}\big)\cap
L_\Gamma\big)\cap\{ \Delta_1(\boldsymbol{\Gamma},X)=0,\dots,
\Delta_m(\boldsymbol{\Gamma},X)=0\}.
\end{equation}
\end{lemma}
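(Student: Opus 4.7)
The plan is to prove the identity (\ref{eq: W'' - identity}) first and then exploit it---together with Proposition \ref{prop: W es irreducible}---to compute the dimension. Denote the right-hand side of (\ref{eq: W'' - identity}) by $\mathcal{Z}$. The nontrivial direction $\mathcal{W}''\subseteq\mathcal{Z}$ reduces, beyond the trivial $\mathcal{W}\subseteq\mathcal{Z}$, to checking that the minor conditions $\Delta_1=\cdots=\Delta_m=0$ hold automatically on the two additional pieces. In both cases this comes from a rank drop in $\mathcal{M}(x,\bfs\gamma)$: on $\Sigma\times(\Pp^n)^{s+1}$ the top $n-r$ rows of $\mathcal{M}$ drop rank because $x$ is singular on $V$, while on $V\times((\Pp^n)^{s+1}\setminus\mathcal{U})$ the bottom $s+1$ rows drop rank because the $\gamma_i$ are linearly dependent. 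Either way the total rank of $\mathcal{M}(x,\bfs\gamma)$ is at most $n-r+s$, so every $(n-r+s+1)$-minor $\Delta_k$ vanishes. The reverse inclusion $\mathcal{Z}\subseteq\mathcal{W}''$ is an immediate case split on whether $x\in\Sigma$ or $\bfs\gamma\in\mathcal{U}$.

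For the dimension, Proposition \ref{prop: W es irreducible} already gives $\dim\mathcal{W}=l-1$, so $\dim\mathcal{W}''\ge l-1$; the task is the matching upper bound on each of the other two pieces. For $(\Sigma\times(\Pp^n)^{s+1})\cap L_\Gamma$ I would project onto $\Sigma$: the fiber over each $x$ is the product of $s+1$ hyperplanes $\{\gamma_i\cdot x=0\}$ in $\Pp^n$, i.e.\ a copy of $(\Pp^{n-1})^{s+1}$ of dimension $(s+1)(n-1)$, so this piece has dimension at most $s+(s+1)(n-1)=n(s+1)-1=l-1$. For $(V\times((\Pp^n)^{s+1}\setminus\mathcal{U}))\cap L_\Gamma$ I would likewise project to $V$: the fiber over $x$ is the locus of $(s+1)$-tuples in $H_x\cong\Pp^{n-1}$ that are linearly dependent, a multiprojective determinantal variety. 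Since the rank $\le s$ locus among $(s+1)\times n$ matrices has codimension $n-s$, this fiber has dimension $sn-1$, and the piece has dimension at most $r+sn-1\le (n-1)+sn-1=l-2$. Combining these bounds yields $\dim\mathcal{W}''=l-1$.

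The main subtlety is the fiber dimension count for the determinantal locus in the third piece: one must carefully pass from the affine codimension formula $(s+1-s)(n-s)=n-s$ for rank-$\le s$ matrices of size $(s+1)\times n$ to the corresponding codimension in the multiprojective space $(\Pp^{n-1})^{s+1}$, accounting for the $s+1$ independent row scalings. Beyond this the argument is bookkeeping: the identity is essentially tautological once one recognizes that singularity of $V$ at $x$, or linear dependence of $\bfs\gamma$, each force $\mathcal{M}(x,\bfs\gamma)$ to be rank-deficient, and the remaining dimension bounds follow directly from standard fiber-dimension estimates via projection onto the first factor.
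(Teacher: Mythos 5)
Your proposal is correct and takes essentially the same route as the paper: the identity is proved by the same case split, with the minors vanishing on the two extra pieces because singularity of $x$ (resp.\ linear dependence of the $\gamma_i$) forces a rank drop in $\mathcal{M}(x,\bfs\gamma)$, and the dimension follows by bounding those two pieces by $l-1$ and at most $l-2$ and invoking Proposition \ref{prop: W es irreducible}. The only (harmless) difference is that for $\big(V\times((\Pp^n)^{s+1}\setminus\mathcal{U})\big)\cap L_\Gamma$ you project onto $V$ and compute the multiprojective determinantal fiber directly, whereas the paper projects onto the second factor and combines the theorem on the dimension of fibers with $\dim L_s(\A^{s+1},\A^{n+1})=s(n+2)$; both yield the same bound $r+sn-1=l-n+r-1\le l-2$.
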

\begin{proof}
First we prove \eqref{eq: W'' - identity}. It is easy to see that
the left--hand side is contained in the right--hand side. On the
other hand, for $(x,\boldsymbol{\gamma})\in \big(V\times
(\Pp^n)^{s+1}\big)\cap L_{\Gamma}$, either  $x\in \Sigma$, or
$\boldsymbol{\gamma}\in (\Pp^n)^{s+1}\setminus \mathcal{U}$, or
$(x,\gamma)\in V_{\mathrm{sm}}\times \mathcal{U}$. In the first two
cases, $(x,\boldsymbol{\gamma})\in\mathcal{W}''$ and the identity
$\Delta_j(x,\boldsymbol{\gamma})=0$ is satisfied for $1\leq j \leq
m$. In the third case we have $(x,\boldsymbol{\gamma})\in
\mathcal{W}''$ if and only if $\Delta_j(x,\boldsymbol{\gamma})=0$
for $1\leq j \leq m$. This shows the claim.

Next we determine the dimension of $\mathcal{W}''$. Observe that
$\Sigma\times(\Pp^n)^{s+1}$ is a cylinder whose intersection with
the equations $\Gamma_0\cdot X=0,\dots,\Gamma_{s}\cdot X=0$ has
codimension $s+1$. Hence, $(\Sigma\times(\Pp^n)^{s+1})\cap L_\Gamma$
has dimension at most $s+l-(s+1)=l-1$. On the other hand, the affine
cone of $(\Pp^n)^{s+1}\setminus\mathcal{U}$ is the closed set
$L_s(\A^{s+1},\A^{n+1})$ of matrices of rank at most $s$. By
\cite[Proposition 1.1]{BrVe88}, $\dim L_s(\A^{s+1},\A^{n+1})
=s(n+2)$; thus, $(\Pp^n)^{s+1}\setminus\mathcal{U}$ has dimension
$s(n+2)-(s+1)=l+s-n-1$. Then $V\times \big((\Pp^n)^{s+1}
\setminus\mathcal{U}\big)$ has dimension $r+l+s-n-1$. Consider the
projection $\pi_2:\big(V\times((\Pp^n)^{s+1}\setminus
\mathcal{U})\big)\cap L_\Gamma\to (\Pp^n)^{s+1}\setminus
\mathcal{U}$ on the second argument. The intersection of $V$ with a
generic linear variety of $\Pp^n$ of codimension $s$ is of pure
dimension $r-s$. Therefore, a generic fiber
$\pi_2^{-1}(\boldsymbol{\gamma})$ has dimension $r-s$ and the
theorem on the dimension of fibers shows that
$$r-s=\dim\pi_2^{-1}(\boldsymbol{\gamma})\ge
\dim\big(V\times((\Pp^n)^{s+1}\setminus \mathcal{U})\big)\cap
L_\Gamma-(l+s-n-1).$$
We deduce that $\dim \big(V\times((\Pp^n)^{s+1}\setminus
\mathcal{U})\big)\cap L_\Gamma\le l-n+r-1<l-1$. Combining these
facts with Proposition \ref{prop: W es irreducible} we conclude that
$\mathcal{W}''$ has dimension $l-1$.
\end{proof}

As an immediate consequence of Lemma \ref{lemma: props W''}, we
obtain the following result.
\begin{corollary}\label{coro: definition W'}
There exist linear combinations $\Delta^1\klk\Delta^{r-s}$ of the
maximal minors $\Delta_1(\boldsymbol{\Gamma},X) \klk
\Delta_m(\boldsymbol{\Gamma},X)$ of the matrix
$\mathcal{M}(X,\boldsymbol{\Gamma})$ of (\ref{eq: matrix with
gammas}) such that the variety $\mathcal{W}'\subset(\Pp^n)^{s+2}$
defined as the set of common solutions of
\begin{equation}\label{eq: definition W_s'}
F_1=0,\dots,F_{n-r}=0,\Gamma_0\cdot X=0,\dots,\Gamma_{s}\cdot X=0,
\Delta^1=0,\dots, \Delta^{r-s}=0,
\end{equation}
is of pure dimension $l-1$ and contains $\mathcal{W}''$.
\end{corollary}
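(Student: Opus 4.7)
The plan is to cut $\mathcal{W}''$ out of $Y:=(V\times(\Pp^n)^{s+1})\cap L_\Gamma$ by iteratively imposing $r-s$ equations that are generic $\cfq$-linear combinations of the maximal minors $\Delta_1,\ldots,\Delta_m$, using Krull's principal ideal theorem to lose exactly one dimension at each step.

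First I would determine $\dim Y$. The projection $Y\to V$ is surjective, with fiber over $x\in V$ equal to the product of the $s+1$ hyperplanes $\{\gamma_i\in\Pp^n:\gamma_i\cdot x=0\}$, an irreducible variety of dimension $(s+1)(n-1)=l-s-1$. Since $V$ is absolutely irreducible of dimension $r$ by Theorem \ref{th: normal complete int implies irred}, it follows that $Y$ is irreducible of dimension $r+l-s-1$. On the other hand, identity \eqref{eq: W'' - identity} of Lemma \ref{lemma: props W''} reads $\mathcal{W}''=Y\cap V(\Delta_1,\ldots,\Delta_m)$, and $\dim\mathcal{W}''=l-1$; hence one needs exactly $r-s$ hypersurface conditions to cut $\mathcal{W}''$ out of $Y$.

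Next I would set $Y_0:=Y$ and construct $\Delta^k$ and $Y_k:=Y_{k-1}\cap\{\Delta^k=0\}$ recursively for $k=1,\ldots,r-s$, maintaining the invariant that $Y_k$ is of pure dimension $l+r-s-1-k$. At step $k$, each irreducible component of $Y_{k-1}$ has dimension $l+r-s-k\ge l>l-1=\dim\mathcal{W}''$, so no such component can be contained in $\mathcal{W}''$; therefore, on each component at least one $\Delta_i$ does not vanish identically, and the set of coefficient vectors $(\lambda_1,\ldots,\lambda_m)\in\cfq^m$ for which $\lambda_1\Delta_1+\cdots+\lambda_m\Delta_m$ vanishes identically on the component is a proper $\cfq$-subspace of $\cfq^m$. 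Since $\cfq$ is infinite and $Y_{k-1}$ has only finitely many components, a generic choice of $\Delta^k$ avoids all these proper subspaces simultaneously, and Krull's principal ideal theorem then yields that $Y_k$ is of pure dimension $\dim Y_{k-1}-1=l+r-s-1-k$.

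After $r-s$ steps, $\mathcal{W}':=Y_{r-s}$ is by construction the zero set in $(\Pp^n)^{s+2}$ of the system \eqref{eq: definition W_s'} and has pure dimension $l-1$. The inclusion $\mathcal{W}''\subset\mathcal{W}'$ is then immediate: points of $\mathcal{W}''$ satisfy $F_i=0$ and $\Gamma_j\cdot X=0$ by \eqref{eq: W'' - identity} and satisfy $\Delta_i=0$ for every $i$, hence satisfy every $\cfq$-linear combination of the $\Delta_i$. The main delicate point is the generic-position argument at each stage, namely ensuring that the finite union of ``bad'' coefficient subspaces does not exhaust $\cfq^m$; this rests on the strict dimension gap $\dim Y_{k-1}>\dim\mathcal{W}''$, which holds precisely for $k\le r-s$, combined with the infiniteness of $\cfq$.
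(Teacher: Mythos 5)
Your proof is correct and follows essentially the same route as the paper: both compute $\dim\big(V\times(\Pp^n)^{s+1}\big)\cap L_\Gamma=r-s+l-1$, note that $\mathcal{W}''$ has codimension at least $r-s$ there, and cut it out by $r-s$ suitably generic linear combinations of the minors. The only difference is that the paper delegates the iterative generic-combination-plus-Krull step to a cited lemma (\cite[Lemma 4.4]{CaMaPr13}), whereas you prove that step directly; your argument is a valid self-contained substitute.
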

\begin{proof}
The intersection of $V\times (\Pp^n)^{s+1}$ with $L_\Gamma$ has
codimension $s+1$, that is,
$$\dim \big(V\times (\Pp^n)^{s+1}\big)\cap
L_\Gamma=r+(n-1)(s+1)=r-s+l-1.$$
Then the result is an easy consequence of the fact that
$\mathcal{W}''$ has codimension at least $r-s$ in $\big(V\times
(\Pp^n)^{s+1}\big)\cap L_\Gamma$. Indeed, applying, e.g.,
{\cite[Lemma 4.4]{CaMaPr13}} to $\big(V\times (\Pp^n)^{s+1}\big)\cap
L_\Gamma$ and $\mathcal{W}''$, we readily deduce the corollary.
\end{proof}

Now we are in a position to prove that the main result of this
section, namely that the set of $\bfs\gamma\in (\Pp^n)^{s+1}$ for
which the linear section $V\cap \{\bfs\gamma\cdot x=0\}$ is not
smooth of codimension $s+1$, is contained in a hypersurface of
$(\Pp^n)^{s+1}$ of ``low'' degree.
\begin{theorem}\label{th: definition H}
Let $V\subset \Pp^n$ be a complete intersection defined over $\fq$,
of dimension $r$, multidegree
$\boldsymbol{d}:=(d_1,\ldots,d_{n-r})$, degree $\delta$ and singular
locus of dimension at most $0\le s\le r-2$. Let
$D:=\sum_{i=1}^{n-r}(d_i-1)$. There exists a hypersurface
$\mathcal{H}\subset(\Pp^n)^{s+1}$, defined by a multihomogeneous
polynomial of degree at most $D^{r-s-1}(D+r-s)\delta$ in each group
of variables $\Gamma_i$, with the following property: if
$\boldsymbol{\gamma}\in(\Pp^n)^{s+1}\setminus\mathcal{H}$ and
$\mathcal{L}:=\{\bfs\gamma\cdot x=0\}$, then $V\cap \mathcal{L}$ is
nonsingular of pure dimension $r-s-1$.
\end{theorem}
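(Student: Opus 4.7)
The plan is to take $\mathcal{H}$ to be (a hypersurface containing) the Zariski closure $\overline{\pi_2(\mathcal{W}')}$, where $\mathcal{W}'\subset(\Pp^n)^{s+2}$ is the variety produced by Corollary \ref{coro: definition W'} and $\pi_2\colon(\Pp^n)^{s+2}\to(\Pp^n)^{s+1}$ is the projection forgetting the first ($X$--)factor. First I would check the exclusion property: by the decomposition of $\mathcal{W}''$ in \eqref{eq: definition W''}, if $\bfs\gamma\notin\pi_2(\mathcal{W}'')$ then $\bfs\gamma\in\mathcal{U}$, $\Sigma\cap\mathcal{L}=\emptyset$ and $\bfs\gamma\notin\pi_2(\mathcal{W})$, so Lemma \ref{lemma: props section notin pi_2(W)} yields that $V\cap\mathcal{L}=V_{\mathrm{sm}}\cap\mathcal{L}$ is nonsingular of pure dimension $r-s-1$. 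Since $\mathcal{W}''\subseteq\mathcal{W}'$, it suffices that $\mathcal{H}\supseteq\pi_2(\mathcal{W}')$; and because $\mathcal{W}'$ has pure dimension $l-1=n(s+1)-1$, the set $\overline{\pi_2(\mathcal{W}')}\subseteq(\Pp^n)^{s+1}$ has dimension at most $l-1$, so is contained in some hypersurface.

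To control the multidegree I would work in the Chow ring of $(\Pp^n)^{s+2}$, namely $\Z[h_0,\ldots,h_{s+1}]/(h_0^{n+1},\ldots,h_{s+1}^{n+1})$, with $h_0$ the hyperplane class of the $X$--factor and $h_i$ that of the $\Gamma_{i-1}$--factor. Since Corollary \ref{coro: definition W'} forces $\mathcal{W}'$ to have the expected codimension $n+1=(n-r)+(s+1)+(r-s)$, the defining equations of \eqref{eq: definition W_s'} intersect properly and
\[
[\mathcal{W}']=\delta\,h_0^{n-r}\prod_{i=1}^{s+1}(h_0+h_i)\Bigl(Dh_0+\sum_{i=1}^{s+1}h_i\Bigr)^{r-s},
\]
in view of the classes $[F_i=0]=d_ih_0$, $[\Gamma_{i-1}\cdot X=0]=h_0+h_i$, and $[\Delta^j=0]=Dh_0+h_1+\cdots+h_{s+1}$, the last because each maximal minor of $\mathcal{M}(X,\bfs\Gamma)$ has degree $D$ in $X$ and is linear in each $\Gamma_i$. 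Pushing forward by the $\Pp^n$--bundle $\pi_2$ retains only the coefficient of $h_0^n$, so the bound for $\deg_{\Gamma_i}\mathcal{H}$ is controlled by the coefficient of $h_0^r h_j$ in $\prod_i(h_0+h_i)(Dh_0+\sum_i h_i)^{r-s}$. A short expansion gives two contributions to this coefficient: $h_j$ is supplied either by the factor $(h_0+h_j)$, in which case the remaining factor must produce $(Dh_0)^{r-s}$ and contributes $D^{r-s}$, or by the multinomial expansion of $(Dh_0+\sum_i h_i)^{r-s}$, contributing $(r-s)D^{r-s-1}$; the total is $D^{r-s-1}(D+r-s)$. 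Multiplying by $\delta$ yields the target bound $\delta D^{r-s-1}(D+r-s)$ in each $\Gamma_i$.

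The main obstacle is that $\overline{\pi_2(\mathcal{W}')}$ need not itself be a hypersurface: its codimension in $(\Pp^n)^{s+1}$ may exceed $1$, in which case the push-forward $(\pi_2)_\ast[\mathcal{W}']$ only records the contribution of those components of $\mathcal{W}'$ whose projection is generically finite. I would resolve this by decomposing $\mathcal{W}'=\bigcup_j\mathcal{C}_j$ into irreducible components and treating each separately: components $\mathcal{C}_j$ with $\dim\overline{\pi_2(\mathcal{C}_j)}=l-1$ give a genuine hypersurface whose multidegree is bounded by the push-forward of $[\mathcal{C}_j]$, and these contributions sum to at most the bound obtained above from $[\mathcal{W}']$, while components with smaller image dimension can be wrapped in any hypersurface of no larger multidegree containing them. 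Taking $\mathcal{H}$ to be the union of all these hypersurfaces, defined by the product of the corresponding multihomogeneous polynomials, then gives the required bound $\delta D^{r-s-1}(D+r-s)$ in each group of variables $\Gamma_i$.
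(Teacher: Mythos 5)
Your exclusion argument is exactly the right one (and matches the paper's), and your Chow--ring count is correct: the coefficient of $h_0^rh_j$ in $\prod_{i=1}^{s+1}(h_0+h_i)\bigl(Dh_0+\sum_ih_i\bigr)^{r-s}$ is indeed $D^{r-s}+(r-s)D^{r-s-1}$, which after multiplying by $\delta$ is the same B\'ezout-type number the paper extracts from the multidegree of the multivariate resultant of the system \eqref{eq: definition W_s' bis} (via \cite[Chapter 3, Theorem 3.1]{CoLiOS98} and the fact that each $\Delta^j$ is linear in each group $\Gamma_i$); a minor caveat is that $[\mathcal{W}']$ is only bounded above by the product of the divisor classes once intersection multiplicities are taken into account, but that inequality goes the right way. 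The genuine gap is the one you flag in your last paragraph, and your proposed repair does not close it. Any component $\mathcal{C}_j$ of $\mathcal{W}'$ whose image under $\pi_2$ has codimension at least $2$ (and $\mathcal{W}$ itself may well map with positive--dimensional fibers onto a small image) satisfies $(\pi_2)_*[\mathcal{C}_j]=0$, so the push--forward of $[\mathcal{W}']$ carries no information about such images. The assertion that each such image ``can be wrapped in any hypersurface of no larger multidegree'' is unsupported: a codimension-$\ge 2$ subvariety of a multiprojective space need not lie on a hypersurface of prescribed small multidegree (the graph of the morphism $\Pp^2\to\Pp^2$, $x\mapsto(x_0^e:x_1^e:x_2^e)$, inside $\Pp^2\times\Pp^2$ lies on no hypersurface whose degree in the first group of variables is less than $e$), so one would need a separate quantitative argument --- e.g.\ slicing $\mathcal{C}_j$ by generic pullback divisors to make $\pi_2$ generically finite and then a containment-in-a-hypersurface statement with multidegree control --- and it is not clear this stays within $D^{r-s-1}(D+r-s)\delta$. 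Moreover, even granting such hypersurfaces, you define $\mathcal{H}$ by the \emph{product} of all the chosen polynomials; degrees per group add under products, and the components with hypersurface image may already exhaust the budget $D^{r-s-1}(D+r-s)\delta$, so the extra factors push you past the stated bound.

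This is precisely the difficulty the paper's construction is designed to avoid. There one takes the multihomogeneous resultant $P$ of generic forms of degrees $d_1,\ldots,d_{n-r},1,\ldots,1,D,\ldots,D$ and pulls it back along the specialization map $\phi(\bfs\gamma)=\bigl(F_1,\ldots,F_{n-r},\bfs\gamma\cdot X,\Delta^1(\bfs\gamma,X),\ldots,\Delta^{r-s}(\bfs\gamma,X)\bigr)$: the polynomial $\phi^*(P)$ vanishes at $\bfs\gamma$ exactly when the specialized system \eqref{eq: definition W_s' bis} has a common zero in $\Pp^n$, i.e.\ exactly on $\pi_2(\mathcal{W}')$, irrespective of the dimensions of fibers or images; it is nonzero because $\dim\mathcal{W}'=l-1<l$ forces the generic fiber of $\pi_2$ to be empty; and its degree in each $\Gamma_i$ is $D^{r-s}\delta+(r-s)D^{r-s-1}\delta$ by the known multidegree of the resultant together with the linearity of the coefficients of each $\Delta^j$ in each $\Gamma_i$ --- the same count you performed. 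If you want to keep your intersection-theoretic formulation, you must either import the resultant (or an equivalent elimination-theoretic device such as a Chow form of $\mathcal{W}'$) at this point, or supply the missing argument controlling the components with small image together with a degree bookkeeping that respects the total budget.
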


\begin{proof}
By the version of the Bertini smoothness theorem of, e.g.,
\cite[Proposition 1.3]{GhLa02a}, for generic $\bfs\gamma \in
(\Pp^n)^{s+1}$ and $\mathcal{L}:=\{\bfs\gamma\cdot x=0\}$, the
linear section $V\cap\mathcal{L}$ is nonsingular of pure codimension
$s+1$. Furthermore, we may assume without loss of generality that
the equations
$$F_1=0\klk F_{n-r}=0,\bfs\gamma\cdot X=0,\Delta^1(\bfs\gamma,X)=0
\klk \Delta^{r-s}(\bfs\gamma,X)=0,$$
do not have common solutions in $\Pp^n$, where $\Delta^1\klk
\Delta^{r-s}$ are the polynomials of \eqref{eq: definition W_s'}.
Denote $\K:=\overline{\fq(\Gamma)}$. Then the equations
\begin{equation}\label{eq: definition W_s' bis}
F_1=0\klk F_{n-r}=0,\bfs\Gamma\cdot X=0,\Delta^1(\bfs\Gamma,X)=0
\klk \Delta^{r-s}(\bfs\Gamma,X)=0
\end{equation}
do not have common solutions in the $n$--dimensional projective
space $\Pp_{\K}^n$ over $\K$. As a consequence, the multidimensional
resultant of the corresponding polynomials is a nonzero element of
$\fq[\bfs\Gamma]$ which vanishes on $\bfs\gamma\in(\Pp^n)^{s+1}$ if
and only if the substitution of $\bfs\gamma$ for $\bfs\Gamma$ in
\eqref{eq: definition W_s' bis} yields a nonempty variety of
$\Pp^n$.

Define $d_i:=1$ for $n-r+1\le i\le n-r+s+1$ and $d_i:=D$ for
$n-r+s+2\le i\le n+1$ so that the polynomials in \eqref{eq:
definition W_s' bis} have degree $d_1\klk d_{n+1}$ in $X$
respectively. Set $D_i:=\binom{d_i+n}{n}-1$ for $1\le i\le n+1$ and
denote $\bfs D:=(D_1\klk D_{n+1})$ and $\Pp^{\bfs D}=\Pp^{D_1}
\times \cdots\times\Pp^{D_{n+1}}$. Let $\bfs\Lambda_i$ be a group of
$D_i+1$ indeterminates over $\cfq$ for $1\le i\le n+1$,
$\cfq[\bfs{\Lambda}]:= \cfq[\bfs{\Lambda}_1\klk
\bfs{\Lambda}_{n+1}]$ and let $P\in\cfq[\bfs \Lambda]$ be the
multivariate resultant of generic polynomials of
$\cfq[\bfs\Lambda_1][X]\klk\cfq[\bfs\Lambda_{n+1}][X]$ of degrees
$d_1\klk d_{n+1}$ respectively. Denote by
$\mathcal{H}_{gen}\subset\Pp^{\bfs D}$ the hypersurface defined by
$P$. For $\bfs\gamma\in(\Pp^n)^{s+1}$, the substitution of
$\bfs\gamma$ for $\bfs\Gamma$ in \eqref{eq: definition W_s' bis}
yields a nonempty variety of $\Pp^n$ if and only if the
corresponding $(n+1)$--tuple of polynomials $\big(F_1\klk
F_{n-r},\bfs\gamma\cdot X, \Delta^1(\bfs\gamma,X)\klk
\Delta^{r-s}(\bfs\gamma,X)\big)$ belongs to $\mathcal{H}_{gen}$. Let
$\phi:(\Pp^n)^{s+1}\to\Pp^{\bfs D}$ be the regular mapping defined
as
$$\phi(\bfs\gamma):=\big(F_1\klk F_{n-r},\bfs\gamma\cdot X,
\Delta^1(\bfs\gamma,X)\klk \Delta^{r-s}(\bfs\gamma,X)\big).$$
Finally, let $\mathcal{H}$ be the hypersurface of $(\Pp^n)^{s+1}$
defined by the polynomial $\phi^*(P)$, where
$\phi^*:\cfq[\bfs{\Lambda}]\to\cfq[\bfs{\Gamma}]$ is the
$\cfq$--algebra homomorphism induced by $\phi$. We claim that
$\mathcal{H}$ satisfies the requirements in the statement of the
theorem.

Indeed, let $\bfs\gamma\notin \mathcal{H}$. Then the substitution of
$\bfs\gamma$ for $\bfs\Gamma$ in \eqref{eq: definition W_s' bis}
yields the empty variety of $\Pp^n$. In particular,
$\bfs\gamma\notin \pi_2(\mathcal{W}')$, where $\mathcal{W}'$ is the
variety of Corollary \ref{coro: definition W'}. By the definition of
$\mathcal{W}'$ we have $\bfs\gamma\notin\pi_2(\mathcal{W}'')$, where
$\mathcal{W}''$ is the variety of \eqref{eq: definition W''}. This
implies that $\bfs\gamma\in\mathcal{U}\setminus\pi_2(\mathcal{W})$,
and Lemma \ref{lemma: props section notin pi_2(W)} shows that
$V_{\rm sm}\cap \mathcal{L}$ is smooth of pure codimension $s+1$.
Furthermore, from the definition of $\mathcal{W}''$ it follows that
$\bfs\gamma\notin\pi_2\big((\Sigma\times(\Pp^n)^{s+1})\cap
L_\Gamma\big)$, which implies that $\Sigma\cap
\mathcal{L}=\emptyset$. We conclude that $V\cap\mathcal{L} =V_{\rm
sm}\cap \mathcal{L}$ is smooth of pure codimension $s+1$.

Finally we prove the bound on the multidegree of $\mathcal{H}$ of
the statement of the theorem. According to, e.g., \cite[Chapter 3,
Theorem 3.1]{CoLiOS98}, the multivariate resultant $P\in\cfq[\bfs
\Lambda]$ is a multihomogeneous polynomial with
$$\deg P_{\Lambda_i}=\left\{\begin{array}{cl}
D^{r-s}\delta&\textrm{ for }n-r+1\le i\le n-r+s+1,\\
D^{r-s-1}\delta&\textrm{ for }n-r+s+2\le i\le n+1. \end{array}
\right.$$
The homomorphism $\phi^*:\cfq[\bfs\Lambda]\to\cfq[\bfs\Gamma]$ maps
$\Lambda_{n-r+1+i}$ to $\Gamma_i$ for $0\le i\le s$ and
$\Lambda_{n-r+s+1+i}$ on the vector of coefficients of
$\Delta^i\in\fq[\bfs\Gamma][X]$ for $1\le i\le r-s$. Since each
coefficient of $\Delta^i\in\fq[\bfs\Gamma]$ is homogeneous of degree
$1$ in $\Gamma_j$ for $0\le j\le s$, we see that
$$\deg_{\Gamma_i}\phi^*(P)=\deg_{\Lambda_{n-r+1+i}}P+
\sum_{j=n-r+s+2}^{n+1}\deg_{\Lambda_j}P=D^{r-s}\delta+ (r-s)
D^{r-s-1}\delta.$$
This finishes the proof of the theorem.
\end{proof}

According to Theorem \ref{th: definition H}, for ``most'' elements
$\boldsymbol{\gamma}\in (\Pp^n)^{s+1}$ the linear section
$V\cap\mathcal{L}:=V\cap \{\bfs\gamma\cdot x=0\}$ is nonsingular of
codimension $s+1$. Furthermore, combining Theorem \ref{th:
definition H} with the results of Section \ref{subsec: zeros
multihomogeneous pols} we are able to estimate the number of
``good'' linear sections $V\cap\mathcal{L}$ which are defined over
$\fq$, which is essential for the results of Section \ref{section:
Hooley}. In particular, for $q>D^{r-s-1}(D+r-s)\delta$, Corollary
\ref{coro: existencia no-cero pol multih} proves that there exists
$\boldsymbol{\gamma}\in (\Pp^{n}(\fq))^{s+1}\setminus \mathcal{H}$.
This yields an effective version of the Bertini smoothness theorem,
which may be of independent interest. We remark that this result
will not be used in the sequel.
\begin{theorem}\label{th: Bertini}
Let $V\subset \Pp^n$ be a complete intersection defined over $\fq$,
of dimension $r$, multidegree
$\boldsymbol{d}:=(d_1,\ldots,d_{n-r})$, degree $\delta$ and singular
locus of dimension at most $0\le s\le r-2$. Let
$D:=\sum_{i=1}^{n-r}(d_i-1)$. If $q>D^{r-s-1}(D+r-s)\delta$, then
there exists a linear variety $\mathcal{L}\subset\Pp^n$ defined over
$\fq$ of dimension $n-s-1$ such that the linear section
$V\cap\mathcal{L}$ is nonsingular of pure codimension $s+1$.
\end{theorem}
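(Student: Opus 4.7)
The plan is to derive Theorem \ref{th: Bertini} as a direct corollary of the two main technical results already established in the excerpt: the effective Bertini statement Theorem \ref{th: definition H} and the multihomogeneous existence statement Corollary \ref{coro: existencia no-cero pol multih}. First I would invoke Theorem \ref{th: definition H} to produce the hypersurface $\mathcal{H}\subset(\Pp^n)^{s+1}$, which is cut out by a single multihomogeneous polynomial $P_{\mathcal{H}}\in\fq[\bfs\Gamma_0\klk\bfs\Gamma_s]$ whose degree in each group of variables $\Gamma_i$ is bounded by $D^{r-s-1}(D+r-s)\delta$. This is exactly the input needed for the second step.

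Second, I would apply Corollary \ref{coro: existencia no-cero pol multih} to $P_{\mathcal{H}}$, with the ambient space $\Pp^{\bfs n}=(\Pp^n)^{s+1}$ and with multidegree $\bfs d=(D^{r-s-1}(D+r-s)\delta\klk D^{r-s-1}(D+r-s)\delta)$. The quantity $d=\max_i d_i$ in that corollary equals $D^{r-s-1}(D+r-s)\delta$, and the hypothesis $q>D^{r-s-1}(D+r-s)\delta$ of Theorem \ref{th: Bertini} is precisely the assumption $q>d$ required by the corollary. Hence there exists an $\fq$--rational point $\bfs\gamma\in (\Pp^n(\fq))^{s+1}$ with $P_{\mathcal{H}}(\bfs\gamma)\ne 0$, that is, $\bfs\gamma\notin\mathcal{H}$.

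Third, setting $\mathcal{L}:=\{\bfs\gamma\cdot x=0\}$, the defining equations $\gamma_0\cdot X=0\klk\gamma_s\cdot X=0$ have coefficients in $\fq$ since $\bfs\gamma$ is $\fq$--rational, so $\mathcal{L}$ is defined over $\fq$. The conclusion of Theorem \ref{th: definition H} applied to $\bfs\gamma\notin\mathcal{H}$ guarantees that $V\cap\mathcal{L}$ is nonsingular of pure dimension $r-s-1$; in particular, $V\cap\mathcal{L}$ has pure codimension $s+1$ in $V$, which forces the components $\gamma_0\klk\gamma_s$ to be $\cfq$--linearly independent, so $\mathcal{L}$ genuinely has dimension $n-s-1$. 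This delivers the desired linear section and finishes the proof.

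There is essentially no obstacle: the statement is an immediate combination of two prior results, and the only thing to verify carefully is the numerical matching between the multidegree bound of $\mathcal{H}$ supplied by Theorem \ref{th: definition H} and the lower bound on $q$ required by Corollary \ref{coro: existencia no-cero pol multih}. The hypothesis $q>D^{r-s-1}(D+r-s)\delta$ is tailored so that these two thresholds coincide.
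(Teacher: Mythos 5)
Your proposal is correct and follows exactly the paper's own route: the authors also obtain Theorem \ref{th: Bertini} by combining the hypersurface $\mathcal{H}$ of Theorem \ref{th: definition H} with Corollary \ref{coro: existencia no-cero pol multih} to produce an $\fq$--rational point $\bfs\gamma\in(\Pp^n(\fq))^{s+1}\setminus\mathcal{H}$ under the hypothesis $q>D^{r-s-1}(D+r-s)\delta$. The numerical matching you check is precisely the point, so nothing is missing.
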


An effective version of a weak form of a Bertini smoothness theorem
for hypersurfaces is obtained in \cite{Ballico03}. Nevertheless, the
bound given in \cite{Ballico03} is exponentially higher than ours
and therefore not suitable for our purposes, even in the
hypersurface case. On the other hand, in \cite{CaMa07} a version of
the Bertini smoothness theorem for normal complete intersections is
established, which is significantly generalized and improved by
Theorem \ref{th: Bertini}. Finally, the result of Theorem \ref{th:
Bertini} is similar both quantitatively and qualitatively to
\cite[Corollary 6.6]{CaMaPr13}, the main contribution over the
latter being the simplicity of the approach. Nevertheless, neither
Theorem \ref{th: Bertini} nor \cite[Corollary 6.6]{CaMaPr13} provide
enough information on the nonsingular linear sections of $V$ of
codimension $s+1$ defined over $\fq$ for the purposes of Section
\ref{section: Hooley}.
%
%
\section{Estimates on the number of rational points}\label{section: Hooley}
Let $V\subset\Pp^n$ be an ideal--theoretic complete intersection
defined over $\fq$, of dimension $r$, multidegree
$\boldsymbol{d}:=(d_1\klk d_{n-r})$ and singular locus of dimension
at most $0\le s\le r-2$. As before, we denote $\delta:=\deg
V=d_1\cdots d_{n-r}$ and $D:=\sum_{i=1}^{n-r}(d_i-1)$. In this
section we obtain an explicit version of the Hooley--Katz estimate
\eqref{eq: estimate hooley-katz} for $V$.

The proof of \eqref{eq: estimate hooley-katz} in \cite{Hooley91}
proceeds in $s+1$ steps, considering successive hyperplane sections
of V until nonsingular sections are obtained. The number of
$\fq$--rational points of each of these nonsingular sections is
estimated using Deligne's estimate. A key ingredient in
\cite{Hooley91} is an upper bound for the second moment
$$
M_1:=\sum_{\boldsymbol{m}\in\fq^{n+1}}
\big(N-q\,N(\boldsymbol{m})\big)^2,
$$
where $N$ and $N(\boldsymbol{m})$ are the number of $\fq$--rational
points of  $V$ and of the linear section of $V$ determined by the
hyperplane defined by $\boldsymbol{m}$. In this section we introduce
a variant of the second moment $M_1$: the second moment $M_{s+1}$
obtained by considering the linear sections of $V$ determined by all
the linear varieties of codimension $s+1$ of $\Pp^n$ defined over
$\fq$.

First we estimate the number of nonsingular $\fq$--definable linear
sections of $V$ of pure codimension $s+1$.
\begin{lemma} \label{lemma: nonsingular linear sections}
Let $N_{ns}$ be the number of $\boldsymbol{\gamma}\in
(\fq^{n+1})^{s+1}$ for which $V\cap \mathcal{L}$ is nonsingular of
pure codimension $s+1$, where $\mathcal{L}:=\{\bfs\gamma\cdot
x=0\}\subset\Pp^n$. Then
$$N_{ns}\ge(q-d)^{s+1}q^{n(s+1)},$$
where $d:=D^{r-s-1}(D+r-s)\delta$.
\end{lemma}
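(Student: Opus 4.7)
The plan is to apply Theorem \ref{th: definition H} together with the multihomogeneous counting estimates of Subsection \ref{subsec: zeros multihomogeneous pols}. By Theorem \ref{th: definition H}, there is a multihomogeneous polynomial $F\in\fq[\bfs\Gamma]$ defining a hypersurface $\mathcal{H}\subset(\Pp^n)^{s+1}$ whose degree in each group $\Gamma_i$ is at most $d:=D^{r-s-1}(D+r-s)\delta$, such that for any $\bfs\gamma\in(\Pp^n)^{s+1}\setminus\mathcal{H}$ (equivalently, for any representative $\bfs\gamma\in(\fq^{n+1})^{s+1}$ with $F(\bfs\gamma)\neq 0$) the section $V\cap\mathcal{L}$ with $\mathcal{L}:=\{\bfs\gamma\cdot x=0\}$ is nonsingular of pure codimension $s+1$. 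Hence it suffices to lower bound the number of $\bfs\gamma\in(\fq^{n+1})^{s+1}$ with $F(\bfs\gamma)\neq 0$.

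If $q\le d$ then $(q-d)^{s+1}q^{n(s+1)}\le 0$ and there is nothing to prove, so assume $q>d$. Then $F$ is a multihomogeneous polynomial of multidegree at most $\bfs d:=(d,\ldots,d)$ in $m:=s+1$ groups of variables, with $\max d_i<q$, so Proposition \ref{prop: upper bound multihomogeneous} applies. Letting $\bfs n:=(n,\ldots,n)$, the number of zeros of $F$ in $\fq^{\bfs n+\bfs 1}=(\fq^{n+1})^{s+1}$ is at most $\eta_{s+1}(\bfs d,\bfs n)$, and therefore
$$
N_{ns}\ge q^{(n+1)(s+1)}-\eta_{s+1}(\bfs d,\bfs n).
$$

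The final step is the identity already recorded in the proof of Corollary \ref{coro: existencia no-cero pol multih}, namely
$$
q^{|\bfs n|+m}-\eta_m(\bfs d,\bfs n)=\sum_{\bfs\varepsilon\in\{0,1\}^m}(-1)^{|\bfs\varepsilon|}\bfs d^{\,\bfs\varepsilon}q^{|\bfs n|+m-|\bfs\varepsilon|}=\prod_{i=1}^m(q^{n_i+1}-d_iq^{n_i}),
$$
which with our uniform choice $n_i=n$, $d_i=d$ and $m=s+1$ telescopes to $(q-d)^{s+1}q^{n(s+1)}$, yielding the asserted bound. No genuine obstacle is expected: the argument is essentially an assembly of Theorem \ref{th: definition H} with the elementary multihomogeneous zero count, the only mild care being to dispose of the degenerate range $q\le d$ where Proposition \ref{prop: upper bound multihomogeneous} does not apply.
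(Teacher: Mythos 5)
Your argument is correct and is essentially the paper's own proof: invoke Theorem \ref{th: definition H} to obtain the multihomogeneous polynomial $F$ of degree at most $d$ in each group $\Gamma_i$, apply Proposition \ref{prop: upper bound multihomogeneous} to bound its zeros in $(\fq^{n+1})^{s+1}$, and telescope the resulting alternating sum to $(q-d)^{s+1}q^{n(s+1)}$. One small caveat: your dismissal of the range $q\le d$ is not valid when $s+1$ is even (there $(q-d)^{s+1}>0$, and for $d>2q$ the claimed bound would even exceed the total count $q^{(n+1)(s+1)}$), but the paper's proof likewise tacitly assumes $q>d$ so that the proposition applies, and the lemma is only ever used under the hypothesis $q>2(s+1)d$, so this does not affect the substance of your argument.
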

\begin{proof}
Let $\mathcal{H}\subset(\Pp^n)^{s+1}$ be the hypersurface of the
statement of Theorem \ref{th: definition H}. 
%
The hypersurface $\mathcal{H}$ is defined by a multihomogeneous
polynomial $F\in \cfq[\boldsymbol{\Gamma}]$ of degree at most $d$ in
each group of variables $\Gamma_i$. For any $\boldsymbol{\gamma}\in
(\fq^{n+1})^{s+1}$ with $F(\bfs\gamma)\not=0$, the corresponding
linear section $V\cap\{\bfs\gamma\cdot x=0\}$ is nonsingular of pure
codimension $s+1$. As a consequence, from Proposition \ref{prop:
upper bound multihomogeneous} we obtain
\begin{align*}
N_{ns}&\ge q^{(n+1)(s+1)}
-\sum_{\boldsymbol{\varepsilon}\in\{0,1\}^{s+1}\setminus\{\boldsymbol{0}\}}
(-1)^{|\boldsymbol{\varepsilon}|+1} d^{|\boldsymbol{\varepsilon}|}
q^{(n+1)(s+1)-|\boldsymbol{\varepsilon}|}\\
&=\sum_{\boldsymbol{\varepsilon}\in\{0,1\}^{s+1}}
(-d)^{|\boldsymbol{\varepsilon}|}
q^{(n+1)(s+1)-|\boldsymbol{\varepsilon}|} =
\sum_{i=0}^{s+1}\sum_{\boldsymbol{\varepsilon}:
\,|\boldsymbol{\varepsilon}|=i}(-d)^i q^{(n+1)(s+1)-i}.
\end{align*}
This implies
$$
N_{ns}\ge
q^{n(s+1)}\left(\displaystyle\sum_{i=0}^{s+1}\binom{s+1}{i} (-d)^i
q^{s+1-i}\right) =q^{n(s+1)}(q-d)^{s+1},
$$
which proves the statement of the lemma.
\end{proof}

Now we consider the second moment defined as
\begin{equation}\label{definicion de 2do momento}
M_{s+1}:=\sum_{\boldsymbol{\gamma}\in\fq^{(n+1)(s+1)}}
\big(N-q^{s+1}N(\boldsymbol{\gamma})\big)^2,
\end{equation}
where $N:=|V(\fq)|$, $N(\boldsymbol{\gamma}):=|V\cap \mathcal{L}
(\fq)|$ and $\mathcal{L}:=\{\bfs\gamma \cdot x = 0\}$.
\begin{lemma}\label{lemma: bound second moment}
We have
$M_{s+1}=Nq^{(n+1)(s+1)}(q^{s+1}-1).$
\end{lemma}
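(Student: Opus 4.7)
The plan is a direct computation: expand the square in the definition of $M_{s+1}$ and evaluate the three resulting sums by interchanging the order of summation, using at the key step that fixing a representative $\tilde x\in\fq^{n+1}\setminus\{0\}$ of a projective point $x\in V(\fq)$ allows one to count the $\boldsymbol{\gamma}$ that annihilate $\tilde x$ (and therefore contribute to $N(\boldsymbol{\gamma})$).

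First I would write
\begin{equation*}
M_{s+1}=N^2\!\sum_{\boldsymbol{\gamma}}1-2Nq^{s+1}\!\sum_{\boldsymbol{\gamma}}N(\boldsymbol{\gamma})+q^{2(s+1)}\!\sum_{\boldsymbol{\gamma}}N(\boldsymbol{\gamma})^2.
\end{equation*}
The first sum is trivially $q^{(n+1)(s+1)}$. For the second sum, for each $x\in V(\fq)$ pick a representative $\tilde x\in \fq^{n+1}\setminus\{0\}$; then $x\in\mathcal{L}$ iff $\gamma_i\cdot\tilde x=0$ for $0\le i\le s$. Since these are $s+1$ independent linear equations, each on a different group of $n+1$ coordinates of $\boldsymbol{\gamma}$, the number of $\boldsymbol{\gamma}\in\fq^{(n+1)(s+1)}$ that annihilate $\tilde x$ is $(q^n)^{s+1}=q^{n(s+1)}$. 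Swapping sums gives $\sum_{\boldsymbol{\gamma}}N(\boldsymbol{\gamma})=Nq^{n(s+1)}$.

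For the third sum I would write $N(\boldsymbol{\gamma})^2=\sum_{x,y\in V(\fq)}\mathbf{1}[x,y\in\mathcal{L}]$, swap the order of summation, and split the pairs $(x,y)$ into the diagonal $y=x$ and the off-diagonal $y\neq x$. On the diagonal, the constraints collapse to $\gamma_i\cdot\tilde x=0$, contributing $q^{n(s+1)}$ per point. Off the diagonal, $\tilde x$ and $\tilde y$ are linearly independent in $\fq^{n+1}$, so $\gamma_i\cdot\tilde x=\gamma_i\cdot\tilde y=0$ cuts out a codimension-$2$ subspace in each group of $n+1$ coordinates, contributing $(q^{n-1})^{s+1}=q^{(n-1)(s+1)}$ per pair. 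Hence
\begin{equation*}
\sum_{\boldsymbol{\gamma}}N(\boldsymbol{\gamma})^2=Nq^{n(s+1)}+N(N-1)q^{(n-1)(s+1)}.
\end{equation*}

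Substituting the three evaluations back and collecting terms, the quadratic-in-$N$ contributions reduce to $(N^2-2N^2+N^2)q^{(n+1)(s+1)}=0$, leaving
\begin{equation*}
M_{s+1}=-Nq^{(n+1)(s+1)}+Nq^{(n+2)(s+1)}=Nq^{(n+1)(s+1)}(q^{s+1}-1),
\end{equation*}
as asserted. There is no real obstacle; the only point that deserves care is the clean counting of $\boldsymbol{\gamma}$'s annihilating one or two fixed nonzero vectors of $\fq^{n+1}$, and the observation that different projective points in $V(\fq)$ lift to linearly independent vectors, so that the off-diagonal count is uniform.
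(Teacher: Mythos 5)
Your proposal is correct and follows essentially the same route as the paper: expand the square, interchange the order of summation, and count the $\boldsymbol{\gamma}$ annihilating one fixed point ($q^{n(s+1)}=q^{t-s-1}$ each) or two distinct points ($q^{(n-1)(s+1)}=q^{t-2(s+1)}$ each, using the linear independence of lifts of distinct projective points, which the paper uses implicitly). The resulting evaluations and the final cancellation agree exactly with the paper's computation.
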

\begin{proof}
Set $t:=(n+1)(s+1)$ and observe that
\begin{equation} \label{eq: second moment}
M_{s+1}=\sum_{\boldsymbol{\gamma}\in\fq^t}N^2 -
2q^{s+1}N\sum_{\boldsymbol{\gamma}\in\fq^t}N(\boldsymbol{\gamma})
+q^{2(s+1)}\sum_{\boldsymbol{\gamma}\in\fq^t}N(\boldsymbol{\gamma})^2.
\end{equation}
First we consider the second term in the right--hand side of
(\ref{eq: second moment}):
\begin{equation}\label{eq: second term second moment}
\sum_{\boldsymbol{\gamma}\in\fq^t}N(\boldsymbol{\gamma})=
\sum_{\boldsymbol{\gamma}\in\fq^t} \mathop{\sum_{x\in
V(\fq)}}_{\boldsymbol{\gamma}\cdot x=0}1 = \sum_{x\in
V(\fq)}\mathop{\sum_{\boldsymbol{\gamma}\in\fq^t}}_{\boldsymbol{\gamma}\cdot
x=0} 1=q^{t-s-1}\,N.
\end{equation}

On the other hand, concerning the third term of the right--hand side
of (\ref{eq: second moment}),
$$\sum_{\boldsymbol{\gamma}\in\fq^t}N(\boldsymbol{\gamma})^2
=\sum_{\boldsymbol{\gamma}\in\fq^t} \Bigg(\mathop{\sum_{x\in
V(\fq)}}_{\boldsymbol{\gamma}\cdot x=0}1\Bigg)
\Bigg(\mathop{\sum_{x'\in V(\fq)}}_{\boldsymbol{\gamma}\cdot
x'=0}1\Bigg)= \sum_{\boldsymbol{\gamma}\in\fq^t}\Bigg(
\mathop{\sum_{x\in V(\fq)}}_{\boldsymbol{\gamma} \cdot x=0}1+
\mathop{\sum_{x, x'\in V(\fq),\,x\neq x'}}_{\boldsymbol{\gamma}\cdot
x=\,\boldsymbol{\gamma}\cdot x'=0}1\Bigg).
$$
Further, we have
\begin{align*}\sum_{\boldsymbol{\gamma}\in\fq^t}\sum_{\substack{x, x'\in
V(\fq),\,x\neq x'\\\boldsymbol{\gamma}\cdot
x=\,\boldsymbol{\gamma}\cdot x'=0}}1=\mathop{\sum_{x, x'\in
V(\fq)}}_{ x\neq x'}
\sum_{\substack{\boldsymbol{\gamma}\in\fq^t\\\boldsymbol{\gamma}\cdot
x=\,\boldsymbol{\gamma}\cdot x'=0}}1&= \mathop{\sum_{x, x'\in
V(\fq)}}_{ x\neq x'}q^{t-2(s+1)}\\&=q^{t-2(s+1)}N(N-1). \end{align*}
We conclude that
\begin{equation}\label{eq: third term of second moment}
\sum_{\boldsymbol{\gamma}\in\fq^t}N(\boldsymbol{\gamma})^2=q^{t-s-1}
N+q^{t-2(s+1)}N(N-1).
\end{equation}

Combining (\ref{eq: second moment}), (\ref{eq: second term second
moment}) and (\ref{eq: third term of second moment}) 
%
we easily deduce the statement of the lemma.
\end{proof}

From Lemma \ref{lemma: bound second moment} we deduce that there are
at least $\frac{1}{2}q^{(n+1)(s+1)}$ elements
$\boldsymbol{\gamma}\in\fq^{(n+1)(s+1)}$ such that the linear
variety $\mathcal{L}:=\{\bfs\gamma\cdot x=0\}$ satisfies the
condition
$$\big||V(\fq)|-q^{s+1}|(V\cap \mathcal{L}) (\fq)|\big|
\le \sqrt{2 N (q^{s+1}-1)}.$$
Otherwise,
$\big||V(\fq)|-q^{s+1}|(V\cap\mathcal{L}) (\fq)|\big|> \sqrt{2 N
(q^{s+1}-1)}$
for at least $\frac{1}{2}q^{(n+1)(s+1)}$ linear varieties
$\mathcal{L}$ defined over $\fq$, and then
$$M_{s+1}>N(q^{s+1}-1)\,q^{(n+1)(s+1)},$$
which contradicts Lemma \ref{lemma: bound second moment}. In other
words, we have the following result.
\begin{corollary}\label{coro: property of the sections}
There exist at least $\frac{1}{2}q^{(n+1)(s+1)}$ elements
$\boldsymbol{\gamma}\in \fq^{(n+1)(s+1)}$ such that the linear
variety $\mathcal{L}:=\{\bfs\gamma\cdot x=0\}$ satisfies the
condition
\begin{equation}\label{eq: Hooley 's condition}
\big||V(\fq)|-q^{s+1}|(V\cap\mathcal{L}) (\fq)|\big| \leq \sqrt{2 N
(q^{s+1}-1)}.
\end{equation}
\end{corollary}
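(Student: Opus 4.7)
The plan is to derive the corollary from Lemma \ref{lemma: bound second moment} by a standard Chebyshev-style second-moment argument, exactly along the lines indicated in the paragraph preceding the statement. Concretely, I would argue by contradiction: assume that the number of $\boldsymbol{\gamma}\in\fq^{(n+1)(s+1)}$ satisfying (\ref{eq: Hooley 's condition}) is strictly less than $\frac{1}{2}q^{(n+1)(s+1)}$. Then strictly more than $\frac{1}{2}q^{(n+1)(s+1)}$ vectors $\boldsymbol{\gamma}$ violate (\ref{eq: Hooley 's condition}), i.e.\ satisfy
$$\bigl(N-q^{s+1}N(\boldsymbol{\gamma})\bigr)^2>2N(q^{s+1}-1).$$

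Next I would insert this strict lower bound into the definition (\ref{definicion de 2do momento}) of $M_{s+1}$. Since every term in that sum is a nonnegative square, restricting the summation to the set of $\boldsymbol{\gamma}$ violating (\ref{eq: Hooley 's condition}) only decreases the total, so
$$M_{s+1}>\tfrac{1}{2}q^{(n+1)(s+1)}\cdot 2N(q^{s+1}-1)=N(q^{s+1}-1)\,q^{(n+1)(s+1)}.$$
This strictly exceeds the exact value $Nq^{(n+1)(s+1)}(q^{s+1}-1)$ supplied by Lemma \ref{lemma: bound second moment}, which is the desired contradiction, forcing at least $\frac{1}{2}q^{(n+1)(s+1)}$ vectors $\boldsymbol{\gamma}$ to satisfy (\ref{eq: Hooley 's condition}).

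There is no real obstacle here; the whole argument is a one-line pigeonhole on a second moment whose exact value is already computed. The only point deserving a sentence of care is the handling of strict vs.\ non-strict inequalities, so that the contradiction with the identity of Lemma \ref{lemma: bound second moment} is genuine and that the conclusion is stated with ``at least'' rather than ``more than''; this is handled automatically by working with the strict inequality in the contradiction hypothesis.
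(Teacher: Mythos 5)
Your argument is correct and is essentially the paper's own proof: the paragraph preceding the corollary carries out exactly this Chebyshev-type contradiction, bounding $M_{s+1}$ from below by summing over the $\boldsymbol{\gamma}$ violating \eqref{eq: Hooley 's condition} and contradicting the exact value in Lemma \ref{lemma: bound second moment}. Your extra remark about strict versus non-strict inequalities is a fair point of care and is handled the same way in the paper.
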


Denote $d:=D^{r-s-1}(D+r-s)\delta$. According to Lemma \ref{lemma:
nonsingular linear sections}, there exist at least
$(q-d)^{s+1}q^{n(s+1)}$ elements
$\boldsymbol{\gamma}\in(\fq^{n+1})^{s+1}$ such that the linear
section of $V\cap\mathcal{L}$ defined by
$\mathcal{L}:=\{\boldsymbol{\gamma}\cdot x=0\}$ is nonsingular of
codimension $s+1$. In particular, for
\begin{equation} \label{ineq: deduccion de la condicion de Hooley}
(q-d)^{s+1}q^{n(s+1)}> \frac{1}{2}q^{(n+1)(s+1)},
\end{equation}
there exists a nonsingular $\fq$--definable linear section
$V\cap\mathcal{L}$ of codimension $s+1$ satisfying (\ref{eq: Hooley
's condition}).

Observe that (\ref{ineq: deduccion de la condicion de Hooley}) is
equivalent to the inequality $(1-\tfrac{d}{q})^{s+1}>\tfrac{1}{2}$.
By the Bernoulli inequality, $(1-\tfrac{d}{q})^{s+1}\geq
1-(s+1)\tfrac{d}{q}$ for $q>d$. Therefore, the condition
$1-(s+1)\tfrac{d}{q}>\tfrac{1}{2}$ implies (\ref{ineq: deduccion de
la condicion de Hooley}). As a consequence, we obtain the following
result.
\begin{corollary}\label{coro: existence of the good linear section}
For $q> 2(s+1)D^{r-s-1}(D+r-s)\delta$, there exists a nonsingular
$\fq$--definable linear section of $V$ of codimension $s+1$ which
satisfies (\ref{eq: Hooley 's condition}).
\end{corollary}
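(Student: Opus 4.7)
The plan is to combine the two counting results already established---Lemma \ref{lemma: nonsingular linear sections}, which counts $\bfs\gamma$ yielding a nonsingular linear section of codimension $s+1$, and Corollary \ref{coro: property of the sections}, which counts $\bfs\gamma$ for which the section satisfies the second--moment inequality \eqref{eq: Hooley 's condition}---via a simple pigeonhole argument. Both subsets live in the same ambient set $\fq^{(n+1)(s+1)}=(\fq^{n+1})^{s+1}$ of cardinality $q^{(n+1)(s+1)}$, so it suffices to show that the sum of their cardinalities exceeds this total; then their intersection is nonempty and any element of it yields the desired linear section.

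More concretely, set $d:=D^{r-s-1}(D+r-s)\delta$. Let $S_1$ be the set of $\bfs\gamma\in\fq^{(n+1)(s+1)}$ producing a nonsingular section of pure codimension $s+1$; by Lemma \ref{lemma: nonsingular linear sections}, $|S_1|\ge (q-d)^{s+1}q^{n(s+1)}$. Let $S_2$ be the set of $\bfs\gamma\in\fq^{(n+1)(s+1)}$ satisfying \eqref{eq: Hooley 's condition}; by Corollary \ref{coro: property of the sections}, $|S_2|\ge \tfrac{1}{2}q^{(n+1)(s+1)}$. Thus $S_1\cap S_2\neq\emptyset$ as soon as
$$(q-d)^{s+1}q^{n(s+1)}>\tfrac{1}{2}q^{(n+1)(s+1)},$$
which, after dividing by $q^{n(s+1)}$, is equivalent to $(1-d/q)^{s+1}>1/2$.

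The remaining step is purely elementary: by the Bernoulli inequality, valid because the hypothesis $q>2(s+1)d$ certainly gives $0<d/q<1$, one has $(1-d/q)^{s+1}\ge 1-(s+1)d/q$, so it is enough that $1-(s+1)d/q>1/2$, i.e.\ $q>2(s+1)d=2(s+1)D^{r-s-1}(D+r-s)\delta$. This is exactly the assumption of the corollary, so the pigeonhole inequality holds and $S_1\cap S_2$ is nonempty. Any $\bfs\gamma$ in this intersection produces the linear section required by the statement. I do not foresee any serious obstacle: all of the non-trivial work---the effective Bertini theorem controlling $|S_1|$, and the second--moment identity controlling $|S_2|$---has been done in the preceding lemmas, and the present corollary is merely their combination through counting.
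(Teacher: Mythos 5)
Your proof is correct and follows essentially the same route as the paper: it combines Lemma \ref{lemma: nonsingular linear sections} with Corollary \ref{coro: property of the sections} by observing that the nonsingular sections outnumber $\frac{1}{2}q^{(n+1)(s+1)}$ when $(1-d/q)^{s+1}>\frac12$, and then uses the Bernoulli inequality to reduce this to the stated hypothesis $q>2(s+1)D^{r-s-1}(D+r-s)\delta$. No gaps; the argument matches the paper's.
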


Finally, we are ready to state our estimate on the number of
$\fq$--rational points of a singular complete intersection.
\begin{theorem}\label{th: estimate hooley}
Let $q> 2(s+1)D^{r-s-1}(D+r-s)\delta$ and  $V\subset\Pp^n$ be a
complete intersection defined over $\fq$, of dimension $r$, degree
$\delta$, multidegree $\boldsymbol{d}:=(d_1\klk d_{n-r})$ and
singular locus of dimension at most $0\le s\le r-2$. Then
\begin{equation}\label{eq: estimate hooley}
\big||V(\fq)|-p_r\big|\leq \big(b'_{r-s-1}+2
\sqrt{\delta}+1\big)\,q^{\frac{r+s+1}{2}},
\end{equation}
where $b'_{r-s-1}:=b'_{r-s-1}(n-s-1,\boldsymbol{d})$ is the
$(r-s-1)$th primitive Betti number of any nonsingular complete
intersection of $\Pp^{n-s-1}$ of dimension $r$ and multidegree
$\boldsymbol{d}$.
\end{theorem}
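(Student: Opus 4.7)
The plan is to combine the effective Bertini machinery of Section~\ref{section: Bertini} with Deligne's estimate \eqref{eq: estimate deligne intro} applied to a ``good'' nonsingular linear section of $V$ of codimension $s+1$, whose existence (under the hypothesis on $q$) is guaranteed by Corollary~\ref{coro: existence of the good linear section}.

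First I would invoke Corollary~\ref{coro: existence of the good linear section} to obtain $\boldsymbol{\gamma}\in\fq^{(n+1)(s+1)}$ such that the linear variety $\mathcal{L}:=\{\boldsymbol{\gamma}\cdot x=0\}$ has codimension $s+1$, the section $V\cap\mathcal{L}$ is nonsingular of pure codimension $s+1$ in $V$, and
$$|N-q^{s+1}N(\boldsymbol{\gamma})|\leq\sqrt{2N(q^{s+1}-1)},$$
where $N:=|V(\fq)|$ and $N(\boldsymbol{\gamma}):=|(V\cap\mathcal{L})(\fq)|$. Since $V\cap\mathcal{L}$ is a nonsingular complete intersection of dimension $r-s-1$ and multidegree $\boldsymbol{d}$ in $\mathcal{L}\cong\Pp^{n-s-1}$, Deligne's estimate \eqref{eq: estimate deligne intro} yields
$$|N(\boldsymbol{\gamma})-p_{r-s-1}|\leq b'_{r-s-1}\,q^{(r-s-1)/2}.$$

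Next, I would assemble these bounds via the elementary identity $p_r=q^{s+1}p_{r-s-1}+p_s$ and the triangle inequality
$$|N-p_r|\leq|N-q^{s+1}N(\boldsymbol{\gamma})|+q^{s+1}|N(\boldsymbol{\gamma})-p_{r-s-1}|+p_s,$$
bounding each term by a multiple of $q^{(r+s+1)/2}$. The middle term is immediately at most $b'_{r-s-1}\,q^{(r+s+1)/2}$. For the first term I would use the crude projective bound $N\leq\delta p_r<2\delta q^r$ coming from \eqref{eq: upper bound -- projective gral}, which gives
$$\sqrt{2N(q^{s+1}-1)}<\sqrt{4\delta q^{r+s+1}}=2\sqrt{\delta}\,q^{(r+s+1)/2}.$$
For the last term, $p_s<q^{s+1}$, and the hypothesis $s\leq r-2$ implies $s+1\leq(r+s+1)/2$, whence $p_s<q^{(r+s+1)/2}$. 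Summing the three contributions yields \eqref{eq: estimate hooley}.

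The substantive analytic and geometric input has already been done in the preceding sections, through Theorem~\ref{th: definition H}, Lemma~\ref{lemma: nonsingular linear sections} and the second moment computation of Lemma~\ref{lemma: bound second moment}, all packaged inside Corollary~\ref{coro: existence of the good linear section}. Consequently, the proof of Theorem~\ref{th: estimate hooley} itself is essentially a bookkeeping argument; the only mild subtlety is the need to absorb the deterministic $p_s$ contribution into the error term $q^{(r+s+1)/2}$, and it is precisely the hypothesis $s\leq r-2$ that permits this.
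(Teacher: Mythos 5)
Your proposal is correct and follows essentially the same route as the paper's proof: invoke Corollary~\ref{coro: existence of the good linear section} to get a nonsingular $\fq$--definable section of codimension $s+1$ satisfying the second--moment inequality, apply Deligne's estimate \eqref{eq: estimate deligne intro} to that section, and combine via the triangle inequality and $p_r=q^{s+1}p_{r-s-1}+p_s$ together with $N\le\delta p_r$. Your explicit bounds $\sqrt{2N(q^{s+1}-1)}\le 2\sqrt{\delta}\,q^{(r+s+1)/2}$ and $p_s<q^{(r+s+1)/2}$ (using $s\le r-2$) are exactly the ``elementary calculations'' the paper leaves implicit.
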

\begin{proof}
Since $q> 2(s+1)D^{r-s-1}(D+r-s)\delta$, by Corollary \ref{coro:
existence of the good linear section} there exists
$\boldsymbol{\gamma}\in\fq^{(n+1)(s+1)}$ such that the linear
section $V\cap\mathcal{L}$ defined by
$\mathcal{L}:=\{\bfs\gamma\cdot x=0\}$ is nonsingular of dimension
$r-s-1$ and satisfies
$$\big||V(\fq)|-q^{s+1}|(V\cap \mathcal{L}) (\fq)|\big|   \leq
\sqrt{2 N (q^{s+1}-1)}.$$
Fix such an element $\boldsymbol{\gamma}\in\fq^{(n+1)(s+1)}$. We
have
$$\big||V(\fq)|-p_r\big|\leq\big||V(\fq)|-q^{s+1}
|V\cap \mathcal{L} (\fq)|\big|+ \big|q^{s+1}|V\cap \mathcal{L}
(\fq)|-p_r\big|.$$
By the definition of $\bfs\gamma$ and the identity
$p_r=q^{s+1}p_{r-s-1}+p_s$, it follows that
$$\big||V(\fq)|-p_r\big|\leq \sqrt{2 N (q^{s+1}-1)}
+ q^{s+1}\big||V\cap \mathcal{L} (\fq)|-p_{r-s-1}\big|+ p_s.$$
Since $V\cap\mathcal{L}$ is a nonsingular complete intersection of
$\mathcal{L}$ of dimension $r-s-1$ and multidegree $\bfs d$,
applying \eqref{eq: estimate deligne intro} we obtain
$$\big||V(\fq)|-p_r\big|\leq \sqrt{2 N (q^{s+1}-1)}
+ b'_{r-s-1}(n-s-1,\boldsymbol{d})\,q^{\frac{r+s+1}{2}}+p_s.$$
By the bound $N\le \delta p_r$ and elementary calculations, the
theorem follows.
\end{proof}

Let $V\subset\Pp^n$ be a singular complete intersection as in the
statement of Theorem \ref{th: estimate hooley}. In \cite[Theorem
6.1]{GhLa02a}, the following estimate is obtained:
\begin{equation}\label{eq: estimate GL}
\big||V(\fq)|-p_r\big|\le b_{r-s-1}'\,q^{\frac{r+s+1}{2}}+ 9\cdot
2^{n-r}\big((n-r)d+3\big)^{n+1}q^{\frac{r+s}{2}},
\end{equation}
where $d:=\max_{1\le i\le n-r}d_i$. Regarding \eqref{eq: estimate
hooley} and \eqref{eq: estimate GL} one observes that the error term
in \eqref{eq: estimate hooley} avoids the exponential dependency on
$n$ present in \eqref{eq: estimate GL}. On the other hand,
\eqref{eq: estimate GL} holds without any condition on $q$, while
\eqref{eq: estimate hooley} is valid for
$q>2(s+1)D^{r-s-1}(D+r-s)\delta$.
%
%
%
\subsection{Normal complete intersections}
Let $V\subset\Pp^n$ be a complete intersection defined by $\fq$, of
dimension $r$, degree $\delta$, multidegree $\bfs{d}$ and the
singular locus of codimension at least $2$. By the case $s=r-2$ of
Theorem \ref{th: estimate hooley} we conclude that, if
$q>2(r-1)D(D+2)\delta$, then
$$
\big||V(\fq)|-p_r\big|\leq\big(b_1'(n-r+1,\bfs{d})
+2\sqrt{\delta}+1\big)\,q^{r-\frac{1}{2}}.
$$
Nevertheless, the condition on $q$ may restrict the range of
applicability of this estimate. For this reason, the next result
provides a further estimate which holds without restrictions on $q$.
\begin{corollary}\label{coro: estimate normal variety}
Let $V\subset\Pp^n$ be a normal complete intersection defined over
$\fq$, of dimension $r\geq 2$, degree $\delta$ and multidegree
$\bfs{d}$. Then
\begin{equation}\label{eq: estimate normal variety}
\big||V(\fq)|-p_r\big|\le
3\,r^{1/2}(D+1)\delta^{3/2}\,q^{r-\frac{1}{2}}.
\end{equation}
\end{corollary}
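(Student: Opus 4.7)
The strategy is a dichotomy on $q$ relative to the threshold $q_0 := 2(r-1)D(D+2)\delta$ from Theorem \ref{th: estimate hooley} specialised to $s = r-2$ (the worst case for a normal complete intersection). If $\delta = 1$, then $V = \Pp^r$, $|V(\fq)| = p_r$, and \eqref{eq: estimate normal variety} is trivial, so I assume $\delta \geq 2$ throughout.

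In the regime $q > q_0$, Theorem \ref{th: estimate hooley} applies with $s = r-2$ and gives
\[
\big||V(\fq)| - p_r\big| \leq \bigl(b'_1(n-r+1, \bfs d) + 2\sqrt{\delta} + 1\bigr)\, q^{r - 1/2}.
\]
The Betti number $b'_1(n-r+1, \bfs d)$ equals twice the genus of a smooth complete intersection curve of multidegree $\bfs d$ in $\Pp^{n-r+1}$. By the adjunction formula, the canonical bundle of such a curve is $\mathcal{O}(D-2)$; this yields $g = 1 + \delta(D-2)/2$ when $D \geq 2$ and $g = 0$ when $D = 1$, whence $b'_1 \leq \delta D$ in all cases. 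A direct calculation then shows $\delta D + 2\sqrt{\delta} + 1 \leq 2\delta(D+1) \leq 3\sqrt{r}(D+1)\delta^{3/2}$ for $r \geq 2$, $\delta \geq 2$ and $D \geq 1$, which delivers \eqref{eq: estimate normal variety}.

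In the regime $q \leq q_0$, I will use the crude bound $|V(\fq)| \leq \delta p_r$ from \eqref{eq: upper bound -- projective gral} combined with $|V(\fq)| \geq 0$ to obtain $||V(\fq)| - p_r| \leq (\delta - 1)p_r \leq 2(\delta - 1)q^r$ via $p_r \leq 2q^r$. It then remains to verify $2(\delta - 1)\sqrt{q} \leq 3\sqrt{r}(D+1)\delta^{3/2}$. Squaring and inserting $q \leq q_0$ together with the inequality $(\delta-1)^2 \leq \delta^2$, this reduces to the algebraic inequality $8(r-1)D(D+2) \leq 9r(D+1)^2$, a routine comparison of quadratics in $D$ that holds for every $r \geq 2$ and $D \geq 1$.

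The main technical point is securing the sharp Betti-number estimate $b'_1 \leq \delta D$: the cruder bound of order $D^{r-s}\delta = D^2\delta$ alluded to after \eqref{eq: estimate Hooley intro} would be too coarse to be absorbed into the factor $(D+1)\delta^{3/2}$ of the corollary, so the adjunction calculation on the curve section appears essential.
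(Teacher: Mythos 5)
Your proposal is correct and follows essentially the same route as the paper: the same dichotomy at $q_0=2(r-1)D(D+2)\delta$, with Theorem \ref{th: estimate hooley} at $s=r-2$ in the large-$q$ regime and the crude bound $|V(\fq)|\le\delta p_r$ together with $q\le q_0$ in the other. The only (cosmetic) difference is that the paper simply quotes the exact value $b'_1(n-r+1,\boldsymbol{d})=(D-2)\delta+2$ from \cite[Theorem 4.1]{GhLa02a}, whereas you re-derive it (and the sufficient bound $b'_1\le\delta D$) by adjunction on the curve section; your observation that the cruder $O(D^2\delta)$ bound would not suffice is accurate.
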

\begin{proof}
Suppose first that $q>2(r-1)D(D+2)\delta$. Since
$b'_{1}(n-r+1,\bfs{d})=(D-2)\delta+2$ (see, e.g., \cite[Theorem
4.1]{GhLa02a}), Theorem \ref{th: estimate hooley} readily implies
the corollary. As a consequence, we may assume $q\le
2(r-1)D(D+2)\delta$. By \eqref{eq: upper bound -- projective gral},
it follows that $|V(\fq)| \le \delta p_r$. Therefore,
$$
\big||V(\fq)|-p_r\big|\leq (\delta -1)p_r \leq 2\delta q^r \le
3\,r^{1/2}(D+1)\delta^{3/2}q^{r-1/2}.
$$
This finishes the proof of the corollary.
\end{proof}

Let $V\subset\Pp^n$ be a normal complete intersection as in
Corollary \ref{coro: estimate normal variety}. According to
\cite[Corollary 6.2]{GhLa02a},
\begin{equation}\label{eq: estimate GL normal var for estimate}
\big||V(\fq)|-p_r\big|\le (\delta(D-2)+2)q^{r-1/2}+9\cdot
2^{n-r}((n-r)d+3)^{n+1}q^{r-1},
\end{equation}
where $d:=\max_{1\le i\le n-r}d_i$. On the other hand,
\cite[Corollary 8.3]{CaMaPr13} shows that
\begin{equation}\label{eq: estimate CMP normal var}
\big||V(\fq)|-p_r\big|\le(\delta(D-2)+2)q^{r-1/2}
+14D^2\delta^2q^{r-1}.
\end{equation}
These are the most accurate estimates to the best of our knowledge.

For the sake of comparison, it can be seen that
$$
2^{n-r}((n-r)d+3)^{n+1}\ge \big(2(n-r)\big)^{n-r}D^{r+1}\delta.
$$
This shows that for varieties of high dimension, say $r\ge (n+1)/2$,
(\ref{eq: estimate normal variety}) and (\ref{eq: estimate CMP
normal var}) are clearly preferable to (\ref{eq: estimate GL normal
var for estimate}). In particular, for hypersurfaces the error term
in both (\ref{eq: estimate normal variety}) and (\ref{eq: estimate
CMP normal var}) is at most quartic in $\delta$, while that of
(\ref{eq: estimate GL normal var for estimate}) contains an
exponential term $\delta^{n+1}$. On the other hand, for varieties of
low dimension (\ref{eq: estimate GL normal var for estimate}) might
be more accurate than both (\ref{eq: estimate normal variety}) and
(\ref{eq: estimate CMP normal var}). In this sense, we may say that
(\ref{eq: estimate normal variety})--(\ref{eq: estimate CMP normal
var}) somewhat complement (\ref{eq: estimate GL normal var for
estimate}). Finally, the right--hand side of (\ref{eq: estimate
normal variety}) depends on a lower power of $\delta$ than that of
(\ref{eq: estimate CMP normal var}), which may yield a significant
improvement in estimates for varieties of large degree.


\end{document}

%